\newcommand{\Xd}{X}
\newcommand{\Li}{\mathcal{L}}
\newcommand{\LL}{\lvert \Li \rvert}
\newcommand{\LK}{\lvert K_{\Xd}\otimes\Li \rvert}
\newcommand{\LLn}{\lvert \Li_n \rvert}
\newcommand{\D}{\mathcal{D}}
\newcommand{\OO}{\mathcal{O}}
\newcommand{\scc}{\mathcal{S}}
\newcommand{\tq}{\mathcal{T}_q}
\newcommand{\C}{\mathbb{C}}
\newcommand{\R}{\mathbb{R}}
\newcommand{\Z}{\mathbb{Z}}
\newcommand{\N}{\mathbb{N}}
\newcommand{\F}{\mathbb{F}}
\newcommand{\T}{\mathcal{I}}
\newcommand{\MCG}{MCG}
\newcommand{\LS}{\mathcal{S}}
\newcommand{\CC}{\mathcal{C}}
\newcommand{\mmu}{[\mu]}
\newcommand{\cp}[1]{\C P^{#1}}
\newcommand{\inv}{\iota}
\newcommand{\ip}[2]{\langle #1,#2 \rangle}
\newcommand{\ipd}[2]{\langle #1,#2 \rangle_2}
\newcommand{\da}{\Delta_a}
\newcommand{\ak}{\alpha_{\kappa}}
\newcommand{\akp}{\alpha_{\kappa}'}
\newcommand{\ok}{\omega_{\kappa}}
\newcommand{\okp}{\omega_{\kappa}'}
\newcommand{\A}{\mathcal{A}}
\newcommand{\ttor}{(\C^\ast)^2}
\DeclareMathOperator{\Arf}{Arf}
\DeclareMathOperator{\im}{im}
\DeclareMathOperator{\id}{id}
\DeclareMathOperator{\spaut}{Sp}
\DeclareMathOperator{\itr}{int}
\newtheorem{Proposition}{Proposition}[section]
\newtheorem{Definition}[Proposition]{Definition}
\newtheorem{theorem}[Proposition]{Theorem}
\newtheorem{Lemma}[Proposition]{Lemma}
\newtheorem{Remark}[Proposition]{Remark}
\newtheorem{Theorem}{Theorem}
\newtheorem*{ack}{Acknowledgement}
\begin{document}

\title{The vanishing cycles of curves in toric surfaces II}
\author{R\'{e}mi Cr\'{e}tois and Lionel Lang}

\maketitle

\footnote{{\bf Keywords:}  toric varieties and Newton polygons,  vanishing cycles and monodromy, mapping class group, Torelli group, spin structures.} 
\footnote{{\bf MSC-2010 Classification:} 14M25,  20F38, 32S30.}

\abstract{
We resume the study initiated in \cite{CL}. For a generic curve $C$ in an ample linear system $\LL$ on a toric surface $\Xd$, a vanishing cycle of $C$ is an isotopy class of simple closed curve that can be contracted to a point along a degeneration of $C$ to a nodal curve in $\LL$. The obstructions that prevent a simple closed curve in $C$ from being a vanishing cycle are encoded by the adjoint line bundle $K_\Xd \otimes \Li$. \\
In this paper, we consider the linear systems carrying the two simplest types of obstruction. Geometrically, these obstructions manifest  on $C$  respectively as an hyperelliptic involution and as a spin structure. In both cases, we determine all the vanishing cycles by investigating the associated monodromy maps, whose target space is the mapping class group $\MCG(C)$. We show that the image of the monodromy is the subgroup of $\MCG(C)$ preserving respectively the hyperelliptic involution and the spin structure.\\
The results obtained here support the Conjecture $1$ in \cite{CL} aiming to describe all the vanishing cycles for any pair $(\Xd, \Li)$.
}

\section{Introduction}

Let $\Xd$ be a smooth and complete toric surface and consider an ample line bundle $\Li$ on it. Assume moreover that the genus of a generic curve $C_0$ in the complete linear system $\LL = \mathbb{P}(H^0(\Xd,\Li))$ is at least $1$. Since $\Xd$ is smooth, $\Li$ is very ample.

 A vanishing cycle on $C_0$ is an isotopy class of non-separating simple closed curves on $C_0$ which is contracted to a node along a degeneration of $C_0$ in $\LL$ to an irreducible nodal curve of geometric genus one less. In \cite{CL}, we considered the question of which non-separating simple closed curve on $C_0$ is a vanishing cycle. In particular, we underlined obstructions in terms of $n^{th}$ roots of the adjoint bundle $K_{\Xd} \otimes \Li$. Moreover, we showed that when there is no obstruction, then any isotopy class of non-separating simple closed curves on $C_0$ is a vanishing cycle (\cite[Theorem 1]{CL}).

In the present paper, we consider the linear systems exhibiting the two simplest types of obstruction. For such linear systems, we give a complete description of the vanishing cycles in terms of the obstructions. More precisely, we prove the following two theorems. 

 \begin{Theorem}\label{thm:ihyper}
Assume that the image of $\Xd\rightarrow \LK^*$ is of dimension~$1$.\\
   Then the curve $C_0$ admits an hyperelliptic involution $\inv$. Moreover, the vanishing cycles on $C_0$ are the isotopy classes of non-separating simple closed curves on $C_0$ which are invariant by $\inv$.
 \end{Theorem}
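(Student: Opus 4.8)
The plan is to prove the statement by computing the monodromy of the pencil (indeed of the whole linear system $\LL$) around its discriminant locus $\D\subset\LL$, i.e.\ the image of the map $\pi_1(\LL\setminus\D)\to\MCG(C_0)$; the vanishing cycles on $C_0$ are exactly the non\nobreakdash-separating simple closed curves whose Dehn twist appears in this image, so it suffices to pin down that image. I will first explain why $C_0$ is hyperelliptic. By adjunction $K_{C_0}=(K_\Xd\otimes\Li)\vert_{C_0}$, and the exact sequence $0\to K_\Xd\to K_\Xd\otimes\Li\to K_{C_0}\to 0$ together with $H^0(\Xd,K_\Xd)=H^1(\Xd,K_\Xd)=0$ (a smooth complete toric surface is rational) shows that restriction $H^0(\Xd,K_\Xd\otimes\Li)\to H^0(C_0,K_{C_0})$ is an isomorphism. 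Hence the canonical map of $C_0$ is the restriction to $C_0$ of the adjoint map $\Xd\to\LK^*$, and its image lies in the image $Z$ of the latter. Since $Z$ is a curve, the canonical map of $C_0$ is not an embedding; the hypothesis forces $g(C_0)\ge 2$ (for $g=1$ the canonical image is a point), so $C_0$ is hyperelliptic, $Z\cong\cp 1$, and $\inv$ is the deck involution of the degree\nobreakdash-two map $C_0\to Z$.

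Next I would establish the ``only if'' direction. Over $U:=\LL\setminus\D$ the dimension $h^0(K_{C_t})=g$ is constant, so the relative canonical map of the universal family $\mathcal C\to U$ carries a fibrewise deck involution, assembling into an involution of $\mathcal C$ over $U$ that restricts to $\inv$ on $C_0$. Consequently the monodromy image $G$ lies in the subgroup $\HH\subset\MCG(C_0)$ of classes commuting with $\inv$, the hyperelliptic mapping class group. By Picard--Lefschetz every vanishing cycle $\delta$ has its Dehn twist $\tau_\delta$ in $G$, hence $\tau_\delta$ commutes with $\inv$ up to isotopy; as $\inv\tau_\delta\inv^{-1}=\tau_{\inv(\delta)}$ and $\delta\mapsto\tau_\delta$ is injective on isotopy classes of essential curves, this forces $\inv(\delta)$ to be isotopic to $\delta$. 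Thus every vanishing cycle is an $\inv$-invariant non\nobreakdash-separating simple closed curve.

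For the converse I would combine a topological input with a Lefschetz input. Topologically, the $\inv$-invariant non\nobreakdash-separating simple closed curves on $C_0$ are precisely the preimages of embedded arcs joining two of the $2g+2$ branch points of $C_0\to\cp1$, and the mapping class group of the $(2g+2)$-punctured sphere acts transitively on such arcs and lifts into $\HH$; hence these curves form a single $\HH$-orbit. On the Lefschetz side, since $\Li$ is very ample $\D$ is irreducible, so $G$ is generated by the Dehn twists about vanishing cycles and acts transitively on the set of vanishing cycles. As that set is non\nobreakdash-empty, $\inv$-invariant, and contained in the single $\HH$-orbit above, it is enough to prove $G=\HH$, i.e.\ the remaining inclusion $G\supseteq\HH$. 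Here the toric hypothesis enters: ``$\im(\Xd\to\LK^*)$ is a curve'' means that the interior lattice points of the polygon $\Delta$ of $(\Xd,\Li)$ are collinear, which confines $\Delta$ to an explicit list up to $\mathrm{GL}_2(\Z)$; for the polygons on this list I would construct, by toric and patchworking degenerations, loops in $U$ realizing the full braid action on the $2g+2$ branch points together with at least one vanishing cycle of arc type -- equivalently, that a chain $c_1,\dots,c_{2g+1}$ of $\inv$-invariant curves generating $\HH$ consists of vanishing cycles. This yields $G=\HH$ and hence the theorem.

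The main obstacle is this last point: exhibiting enough explicit one\nobreakdash-nodal degenerations, uniformly over the classified family of ``hyperelliptic'' polygons, to see that the monodromy fills the whole of $\HH$. The algebraic ingredients (hyperellipticity, the inclusion $G\subseteq\HH$, irreducibility of $\D$) and the surface\nobreakdash-topology classification of $\inv$-invariant curves are comparatively routine; the combinatorial construction of the vanishing cycles and the proof that the corresponding twists generate $\HH$ -- most cleanly phrased through the pencils and the induced braid action on the branch points -- is where the real work is concentrated.
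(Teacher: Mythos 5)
Your overall architecture coincides with the paper's: establish hyperellipticity, show the monodromy image $G$ lies in the hyperelliptic mapping class group $\HH=\MCG(C_0,\inv)$ (whence every vanishing cycle is $\inv$-invariant), observe that the $\inv$-invariant non-separating curves form a single $\HH$-orbit via the quotient sphere with $2g+2$ marked points (this is exactly the paper's Lemma \ref{lem:symcurves}, proved there with geodesic representatives), and conclude by proving $G=\HH$. Your derivation of hyperellipticity is genuinely different and arguably cleaner: the paper goes through the classification of polygons with one-dimensional interior polygon (Proposition \ref{onedim}) and the degree-two projection of a Hirzebruch surface, together with \cite[Theorem 5.5]{KlL} to get the involution of the universal curve; you instead use $H^0(K_\Xd)=H^1(K_\Xd)=0$ to identify the canonical map of $C_0$ with the restriction of $\Xd\rightarrow\LK^*$, so that a one-dimensional adjoint image forces $C_0$ to be hyperelliptic. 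That argument is correct and more intrinsic, though you still need a fibrewise-varying involution of the universal curve over $\LL\setminus\D$ to get $G\subset\HH$; your relative canonical map does supply it.

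The genuine gap is the inclusion $G\supseteq\HH$, which you correctly identify as the crux but do not carry out. It is not enough to gesture at "toric and patchworking degenerations realizing the full braid action": one must actually exhibit a chain of $2g+1$ simple closed curves whose Dehn twists lie in $\im(\mu)$ and which generate $\HH$. The paper does this by normalizing so that $\da$ is the segment from $(1,1)$ to $(g_\Li,1)$ and taking the alternating chain $\delta_{\sigma_0},\delta_{v_1},\delta_{\sigma_1},\dots,\delta_{v_{g_\Li}},\delta_{\sigma_{g_\Li}}$, where the $\delta_{v_i}$ are the $A$-cycles at the interior lattice points, the $\sigma_i$ for $1\le i\le g_\Li-1$ are the primitive segments of the edge of $\da$, and $\sigma_0,\sigma_{g_\Li}$ are bridges at its two vertices. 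That all of these twists lie in $\im(\mu)$ is precisely Theorem \ref{thm:rappel}, imported from the prequel \cite{CL} (Theorem 3 and Propositions 6.6, 6.7 there); this is the nontrivial input your proposal lacks and cannot be replaced by a soft argument. One then needs the Alexander method to identify $\inv$ with the boundary word $\tau_{\sigma_0}\tau_{v_1}\cdots\tau_{\sigma_{g_\Li}}\tau_{\sigma_{g_\Li}}\cdots\tau_{v_1}\tau_{\sigma_0}$ of the chain, and \cite[Theorem 9.2]{farbmarg} to conclude that these $2g+1$ twists generate $\MCG(C_0,\inv)$. Without producing these explicit elements of $\im(\mu)$, the converse direction of the theorem remains unproved.
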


 \begin{Theorem}\label{thm:ispin}
Assume that the image of $\Xd\rightarrow \LK^*$ is of dimension~$2$ and that the largest order of a root $\LS$ of $K_{\Xd}\otimes \Li$ is $2$.\\
  Then, the restriction $\LS_{|C_0}$ of $\LS$ to the curve $C_0$ is a spin structure. Moreover, a simple closed curve on $C_0$ is a vanishing cycle if and only if its tangent framing lifts to the spin structure $\LS_{|C_0}$.
 \end{Theorem}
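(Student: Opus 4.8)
\medskip
\noindent\textit{Proof strategy (sketch).}
The plan is to first dispose of the spin assertion by adjunction, then reduce the characterization of vanishing cycles to a statement about the image of the monodromy, which is where the real work sits. Let $U\subset\LL$ be the dense open subset parametrising smooth curves and let $\rho\colon\pi_1\bigl(U,[C_0]\bigr)\to\MCG(C_0)$ be the geometric monodromy representation. For every $C\in U$ the adjunction formula gives $K_C\cong(K_\Xd\otimes\Li)|_C$. By hypothesis $K_\Xd\otimes\Li$ admits a root $\LS$ of order exactly $2$, i.e. $\LS^{\otimes2}\cong K_\Xd\otimes\Li$; since the Picard group of a smooth complete toric surface is torsion free, $\LS$ is unique. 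Restricting this isomorphism to $C$ gives $(\LS|_C)^{\otimes2}\cong K_C$, so $\LS|_C$ is a theta characteristic, that is, a spin structure, and these spin structures form a local system over $U$. Hence $\mathrm{Im}\,\rho\subseteq\MCG(C_0,\LS|_{C_0})$, the stabiliser of the spin structure. (The assumption that $\Xd\to\LK^*$ has two-dimensional image is what rules out a hyperelliptic reduction of $\rho$, and the assumption that the largest root has order $2$ rules out any further cyclic obstruction, so $\LS$ is the only reduction in play.)

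Next comes the necessity direction. Following \cite{CL}, where the obstruction attached to a root of the adjoint bundle is isolated, the square-root obstruction reads exactly as: a vanishing cycle $\delta$ must have its tangent framing lift to $\LS|_{C_0}$ (equivalently, the meridian of the discriminant at the node realising $\delta$ is sent by $\rho$ to $\tau_\delta$, which preserves $\LS|_{C_0}$ by the previous paragraph). Write $W$ for the set of non-separating simple closed curves on $C_0$ whose tangent framing lifts to $\LS|_{C_0}$, and $V\subseteq W$ for the set of vanishing cycles. Note that no separating curve lies in $W$: the tangent framing of a curve bounding a subsurface of Euler characteristic $1-2h$ represents the nonzero fibre class in $H_1(\mathrm{UT}C_0;\Z/2)$ and therefore never lifts, so the dichotomy in the statement is genuinely the one cut out by $W$.

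I would then reduce the theorem to a monodromy statement. The set $V$ is non-empty, since a generic pencil in $\LL$ is a Lefschetz pencil, and it is invariant under $\mathrm{Im}\,\rho$, since a loop in $U$ carries a nodal degeneration of $C_0$ to another one. By the change-of-coordinates principle for spin surfaces, $\MCG(C_0,\LS|_{C_0})$ acts transitively on $W$. Combining this with the two previous paragraphs, the desired equality $V=W$ will follow as soon as one proves
\[
\mathrm{Im}\,\rho=\MCG(C_0,\LS|_{C_0}),
\]
the surjectivity result announced in the introduction. (Transitivity of $\mathrm{Im}\,\rho$ on $W$ would already suffice, but the natural path goes through the full identification of the image.)

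The heart of the proof, and where I expect the real difficulty to lie, is this surjectivity. Since $\Li$ is very ample the discriminant $\D\subset\LL$ is an irreducible hypersurface, so $\pi_1(U)$ is generated by the conjugates of a single meridian, whence $\mathrm{Im}\,\rho$ is generated by Dehn twists around vanishing cycles. The strategy would be: (i) using the combinatorics of the Newton polygon $\Delta$ --- degenerations along its subdivisions, the simple Harnack curves in $\LL$ together with the real structure they carry, and reduction to smaller polygons where the system is unobstructed (\cite[Theorem 1]{CL}) or hyperelliptic (Theorem \ref{thm:ihyper}) --- produce an explicit collection $\delta_1,\dots,\delta_k$ of vanishing cycles on $C_0$; (ii) show that $\tau_{\delta_1},\dots,\tau_{\delta_k}$ generate $\MCG(C_0,\LS|_{C_0})$, by moving the $\delta_i$ (acting if necessary by elements of $\mathrm{Im}\,\rho$, which preserves $V$) into a configuration of lifting curves whose Dehn twists are known to generate the spin mapping class group --- a Humphries-type generating system for $\MCG(C_0,\LS|_{C_0})$, which itself must be established. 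The main obstacle is precisely the interface between (i) and (ii): extracting from the toric degeneration data enough vanishing cycles, with sufficiently well-controlled isotopy classes, to realise such a generating system; this is where the combinatorics of $\Delta$ and the machinery of \cite{CL} have to do their work, and where I expect the bulk of the argument to go.
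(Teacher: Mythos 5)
Your argument follows the paper's own route essentially step for step: adjunction (plus torsion-freeness of $\mathrm{Pic}(\Xd)$) gives the spin structure; necessity of the lifting condition follows from $\im(\mu)\subset\MCG(C_0,\LS_{|C_0})$ together with the observation that a Dehn twist $\tau_\delta$ preserves the quadratic form only if $q(\delta)=1$ (Lemma \ref{lem:adm}); and sufficiency follows from the existence of vanishing cycles (the $A$-cycles of a simple Harnack curve, Theorem \ref{thm:rappel}), the transitivity of $\MCG(C_0,q)$ on non-separating curves with a fixed value of $q$ (Lemma \ref{lem:transitive}, proved via Randal-Williams), and the surjectivity $\im(\mu)=\MCG(C_0,\LS_{|C_0})$ of Theorem \ref{thm:spinm}. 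The only caveat is that this surjectivity, which you correctly single out as the heart of the matter, remains a sketch in your proposal: the paper's execution imports from Salter the fact that all admissible twists lie in $\im(\mu)$, and supplies as its genuinely new ingredient Theorem \ref{thm:spin}, that admissible twists generate $\MCG(C_0,q)$ for $g\geq 5$ (proved via the Arf classification, the genus-$2$ star relation for squares of twists, and Johnson's genus-$1$ bounding-pair generators of the Torelli group).
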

For the definition of spin structure, we refer to \cite{Ati}. Recall also from \cite[Proposition 3.2]{Ati} that spin structures are in natural bijection with the square roots of the canonical line bundle.

Note that since the genus of the generic curve in $\LL$ is at least $1$, the line bundle $K_{\Xd}\otimes \Li$ has an empty base locus (see \cite[Proposition 4.3]{CL}) and thus the natural map $\Xd\rightarrow\LK^*$ is well-defined.

 The study of the vanishing cycles on $C_0$ can be reformulated in terms of the monodromy map. Indeed, under our hypothesis on $\Li$, the discriminant $\D \subset \LL$ parametrizing singular curves is an irreducible hypersurface of $\LL$ (see \cite[6.5.1]{DIK}]). The geometric monodromy map
\[
\mu : \pi_1(\LL\setminus\D,C_0)\rightarrow \MCG(C_0)
\]
is defined using any trivialisation of the universal curve over a loop in $\LL\setminus\D$. Its weaker version, the algebraic monodromy map
\[
\mmu : \pi_1(\LL\setminus \D,C_0)\rightarrow \spaut(H_1(C_0,\Z))
\]
 is defined by composing $\mu$ with the natural map $\MCG(C_0)\rightarrow \spaut(H_1(C_0,\Z))$, where $\spaut(H_1(C_0,\Z))$ is the group of automorphisms of $H_1(C_0,\Z)$ which preserve the intersection form.

The relation with our original problem is as follows. If $\delta\subset C_0$ is a vanishing cycle then the Dehn twist $\tau_{\delta}$ along $\delta$ is in the image of $\mu$ (see \cite[\S 10.9]{ACG2}). However, the converse is not obviously true. Still, we prove the two following theorems and show that they imply Theorems \ref{thm:ihyper} and \ref{thm:ispin}.

 \begin{Theorem}\label{thm:hyperm}
Assume that the image of $\Xd\rightarrow \LK^*$ is of dimension~$1$.\\
Then the curve $C_0$ admits an hyperelliptic involution $\inv$ and the image of $\mu$ is the associated hyperelliptic mapping class group
\[
\MCG(C_0,\inv) = \{\varphi\in\MCG(C_0)\ |\ \varphi\circ\inv = \inv\circ\varphi\}.
\]
 \end{Theorem}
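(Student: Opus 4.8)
The plan is to first extract the involution $\inv$ from the hypothesis, and then prove the two inclusions $\im\mu\subseteq\MCG(C_0,\inv)$ and $\MCG(C_0,\inv)\subseteq\im\mu$ separately, the second being the substantial one.

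\emph{Identifying the involution.} Since $\Xd$ is a smooth complete toric surface it is rational, so $h^1(\OO_\Xd)=h^2(\OO_\Xd)=0$; combining the cohomology sequence of $0\to K_\Xd\to K_\Xd\otimes\Li\to(K_\Xd\otimes\Li)_{|C_0}\to0$ with adjunction $(K_\Xd\otimes\Li)_{|C_0}\cong K_{C_0}$ gives an isomorphism $H^0(\Xd,K_\Xd\otimes\Li)\xrightarrow{\ \sim\ }H^0(C_0,K_{C_0})$, so the adjoint map $\Xd\to\LK^*$ restricts along $C_0$ to the canonical map of $C_0$. The case $g:=g(C_0)=1$ is impossible, since then $\LK^*$ is a point; and for $g\ge2$ the hypothesis that $\Xd\to\LK^*$ has one‑dimensional image (which one reads off the toric description of the adjoint map by the interior lattice points of the polygon of $(\Xd,\Li)$, these being collinear) forces the canonical image of $C_0$ to be the rational normal curve $\Xd\to\LK^*$ maps onto, hence $C_0$ is hyperelliptic. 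We let $\inv$ be its hyperelliptic involution: characterised as the unique automorphism of $C_0$ acting by $-\id$ on $H^0(C_0,K_{C_0})$, equivalently as the deck transformation of the degree two map $\pi\colon C_0\to\mathbb{P}^1$ onto the adjoint image, it is canonically attached to the complex structure of $C_0$.

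\emph{First inclusion.} The hypothesis involves only $(\Xd,\Li)$, so every smooth member of $\LL$ is hyperelliptic, and the hyperelliptic involutions of the fibres of the universal curve $\CC\to\LL\setminus\D$ assemble, by the rigid characterisation above (it is fibrewise the $-1$‑eigenautomorphism on the Hodge bundle, which varies holomorphically), into a single holomorphic involution of $\CC$ over $\LL\setminus\D$. Therefore along any loop in $\LL\setminus\D$ the geometric monodromy diffeomorphism can be chosen to commute with $\inv$, which gives $\im\mu\subseteq\MCG(C_0,\inv)$; in particular every vanishing cycle of $C_0$ is invariant under $\inv$.

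\emph{Second inclusion — the main point.} I would use the structure of $\MCG(C_0,\inv)$. By Birman--Hilden theory there is an exact sequence $1\to\langle\inv\rangle\to\MCG(C_0,\inv)\to\MCG(S^2,2g+2)\to1$, with $S^2=C_0/\inv$ carrying the $2g+2$ branch points of $\pi$; the target is generated by the half‑twists exchanging consecutive branch points, whose lifts are exactly the Dehn twists $\tau_{\delta_1},\dots,\tau_{\delta_{2g+1}}$ along the $\inv$‑invariant non‑separating simple closed curves $\delta_i=\pi^{-1}(a_i)$, where $a_1,\dots,a_{2g+1}$ is a chain of arcs joining the branch points in a row. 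As $\delta_1,\dots,\delta_{2g+1}$ is then a chain filling $C_0$, the hyperelliptic relation writes $\inv=(\tau_{\delta_1}\cdots\tau_{\delta_{2g+1}})^{g+1}$, so $\MCG(C_0,\inv)=\langle\tau_{\delta_1},\dots,\tau_{\delta_{2g+1}}\rangle$; moreover, by the change of coordinates principle in the punctured sphere, the $\inv$‑invariant non‑separating simple closed curves form a single $\MCG(C_0,\inv)$‑orbit. Hence it suffices to produce one chain $\delta_1,\dots,\delta_{2g+1}$ of such curves each of which is a vanishing cycle — its Dehn twist then lying in $\im\mu$ — for then $\im\mu\supseteq\langle\tau_{\delta_1},\dots,\tau_{\delta_{2g+1}}\rangle=\MCG(C_0,\inv)$. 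This is where the toric geometry enters: using the classification of Newton polygons with collinear interior lattice points, one may assume that $\Xd$ is a $\mathbb{P}^1$‑fibration over $\mathbb{P}^1$ with $C_0$ a bisection and that the $2g+2$ branch points of $\pi$ are prescribed by the choice of $C_0$ in $\LL$; I would then write down, inside $\LL$, a Lefschetz pencil — or, pair by pair, one‑parameter families collapsing two consecutive branch points to a single node — realising a full chain $\delta_1,\dots,\delta_{2g+1}$ as vanishing cycles.

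\emph{Conclusion and main obstacle.} The two inclusions give $\im\mu=\MCG(C_0,\inv)$; the same ingredients yield Theorem \ref{thm:ihyper}, since the irreducibility of $\D$ makes $\im\mu$ act transitively on vanishing cycles, so that these form a single $\MCG(C_0,\inv)$‑orbit, which by the change of coordinates principle is precisely the set of $\inv$‑invariant non‑separating simple closed curves. The hard part is the last step: one must control how the branch divisor of $\pi$ moves as $C_0$ varies in $\LL$ and check that two chosen consecutive branch points can be collided while the rest of the configuration stays generic, so that only a node appears — and this relies on the combinatorial classification of the "hyperelliptic" polygons. A secondary technical point is the rigidity statement making the fibrewise hyperelliptic involution well defined on the whole universal curve.
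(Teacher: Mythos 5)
Your overall architecture coincides with the paper's: establish the fibrewise hyperelliptic involution and its compatibility with the universal curve to get $\im(\mu)\subseteq\MCG(C_0,\inv)$, then show the reverse inclusion by exhibiting a chain of $2g+1$ $\inv$-invariant simple closed curves whose Dehn twists lie in $\im(\mu)$ and which generate $\MCG(C_0,\inv)$. The first inclusion is fine (the paper cites L{\o}nsted--Kleiman for the involution of the universal curve where you argue by rigidity of the $(-1)$-eigenspace, and it handles the blown-up cases via Proposition \ref{onedim}), and your reduction of the second inclusion to finding such a chain is exactly the paper's reduction, which it implements by combining the hyperelliptic relation (via the Alexander method) with \cite[Theorem 9.2]{farbmarg}.

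The genuine gap is the step you yourself label ``the hard part'': actually producing the chain of $2g+1$ invariant vanishing cycles. This is the entire substantive content of the theorem, and your proposal for it --- colliding consecutive branch points of the double cover $\pi:C_0\to\cp{1}$ by one-parameter families inside $\LL$ --- is only asserted, not carried out. In particular: (i) in cases (2) and (3) of Proposition \ref{onedim} the system $\LL$ is the proper transform of the subsystem of $\LLn$ of curves through one or two prescribed points, so the branch divisor of $\pi$ does not move freely and you must verify that the required collisions are still realizable by curves acquiring only one node; (ii) each collision produces a vanishing cycle only up to the choice of a path back to the base point $C_0$ in $\LL\setminus\D$, and you must check that the $2g+1$ classes so obtained can be simultaneously represented on $C_0$ as a chain $\delta_1,\dots,\delta_{2g+1}$ with $i(\delta_k,\delta_{k+1})=1$ and all other pairs disjoint --- without this the generation statement does not apply. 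The paper sidesteps both issues by taking $C_0$ to be a smooth simple Harnack curve and reading the chain off the Newton polygon: after normalizing $\Delta$ so that $\da=[(1,1),(g_\Li,1)]$, the $A$-cycles $\delta_{v_i}$ are vanishing cycles and the twists $\tau_{\sigma_i}$ along the curves over the two bridges and the primitive segments of $\da$ lie in $\im(\mu)$ by Theorem \ref{thm:rappel} (i.e.\ by the results of \cite{CL}), and these $2g+1$ curves form a chain by Remark \ref{rem:sympbas}. To complete your argument you must either import those inputs or genuinely execute the branch-point degeneration in all three cases of the classification.
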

 
 \begin{Theorem}\label{thm:spinm}
Assume that the image of $\Xd\rightarrow \LK^*$ is of dimension~$2$ and let $n$ be the largest order of a root $\LS$ of $K_{\Xd}\otimes \Li$.\\
 If $n=2$ then the monodromy map $\mu$ is surjective on the subgroup $\MCG(C_0,\LS_{|C_0})$ of elements of the mapping class group of $C_0$ which preserve the spin structure $\LS_{|C_0}$.\\
If $n$ is even, then the algebraic monodromy map $\mmu$ is surjective on the subgroup $\spaut(H_1(C_0,\Z),\LS_{|C_0}^{\otimes \frac{n}{2}})$ of elements of $\spaut(H_1(C_0,\Z))$ which preserve the spin structure $\LS_{|C_0}^{\otimes \frac{n}{2}}$.
 \end{Theorem}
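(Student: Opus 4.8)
The plan is to prove the two inclusions separately, following the scheme used for Theorem~\ref{thm:hyperm}.

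\emph{The spin structure is monodromy-invariant.} Since $\LS$ is an $n$-th root of $K_\Xd\otimes\Li$ defined on all of $\Xd$, its pull-back to the total space $\CC$ of the tautological family over $\LL\setminus\D$ restricts on each fibre $C$ to an $n$-th root of $K_C=(K_\Xd\otimes\Li)_{|C}$. When $n$ is even, $\LS^{\otimes n/2}$ thus yields a flat family of square roots of the relative canonical bundle through $\LS_{|C_0}^{\otimes n/2}$, i.e.\ a flat family of spin structures; hence the geometric monodromy preserves $\LS_{|C_0}^{\otimes n/2}$, and composing with $\MCG(C_0)\to\spaut(H_1(C_0,\Z))$ gives $\im\mmu\subseteq\spaut(H_1(C_0,\Z),\LS_{|C_0}^{\otimes n/2})$; for $n=2$ the same argument already at the level of mapping classes gives $\im\mu\subseteq\MCG(C_0,\LS_{|C_0})$. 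I note that the coarser roots $\LS^{\otimes n/d}$, $d\mid n$, are likewise preserved by $\mu$, so that for $n>2$ even the image of $\mu$ itself is in general strictly smaller than $\MCG(C_0,\LS_{|C_0}^{\otimes n/2})$; at the level of $\spaut(H_1(C_0,\Z))$, however, only the mod-$2$ datum survives as a constraint, since there is no $\spaut(H_1(C_0,\Z))$-equivariant refinement of the intersection form modulo $d$ for $d>2$, whereas the spin structure $\LS_{|C_0}^{\otimes n/2}$ corresponds precisely to the invariant quadratic form $q_{\LS}$ on $H_1(C_0,\Z/2)$. This is why only the algebraic statement is claimed in that range.

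\emph{Surjectivity.} First I would record the structural fact that, $\D$ being an irreducible hypersurface of the projective space $\LL$, the group $\pi_1(\LL\setminus\D,C_0)$ is normally generated by a meridian of $\D$, whose image under $\mu$ is the Dehn twist along the corresponding vanishing cycle; since moreover all vanishing cycles form a single $\im\mu$-orbit (again by irreducibility of $\D$), this gives $\im\mu=\langle\,\tau_\delta\ :\ \delta\text{ a vanishing cycle}\,\rangle$. It therefore suffices to exhibit enough vanishing cycles for their Dehn twists to generate $\MCG(C_0,\LS_{|C_0})$ (resp., homologically, $\spaut(H_1(C_0,\Z),\LS_{|C_0}^{\otimes n/2})$). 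For this I would use the degeneration machinery of \cite{CL}: choosing a unimodular triangulation of the Newton polygon of $\Li$ and degenerating $C_0$ to the associated totally degenerate curve, the resulting \ph model (equivalently, a \sh model) equips $C_0$ with an explicit configuration $\Gamma=\{\delta_i\}$ of simple closed curves, one per bounded edge of the tropical limit, each a vanishing cycle, together with additional monodromy elements realizing the combinatorial symmetries of the degeneration. Two points then have to be checked: (i) that the $q_{\LS}$-value of each $[\delta_i]$ is the value for which a Dehn twist (resp. transvection) preserves the spin structure --- automatic because the $\delta_i$ are vanishing cycles, but also the input to (ii); and (ii) that the twists $\tau_{\delta_i}$ together with the extra monodromy elements generate $\MCG(C_0,\LS_{|C_0})$ when $n=2$. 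Step (ii) would rest on a generation result for the spin mapping class group by Dehn twists along curves of admissible $q$-value, applied to $\Gamma$, which one must show is connected, fills $C_0$, and builds up the full genus. For $n>2$ even, the same combinatorial input is fed into the homological statement: the transvections $t_{[\delta_i]}$ (the images of the $\tau_{\delta_i}$) generate $\spaut(H_1(C_0,\Z),\LS_{|C_0}^{\otimes n/2})$, using the classical generation of this integral orthogonal group by transvections along vectors of admissible $q$-value (in the spirit of Dieudonné's theorem).

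\emph{Main obstacle.} The technical heart is step (ii): converting the combinatorial configuration $\Gamma$ coming from the toric degeneration into an explicit generating set of the spin mapping class group (resp. of its homological image). This requires a careful bookkeeping of the $q_{\LS}$-values along $\Gamma$ --- which is where the hypothesis that $\LS_{|C_0}^{\otimes n/2}$ is a genuine spin structure, rather than a finer root, is used --- together with a spin-refined version of the classical principle that a filling, chain-like configuration of curves generates the mapping class group. Making the toric and tropical combinatorics line up with such a generation statement, uniformly over all pairs $(\Xd,\Li)$ satisfying the hypothesis, is the delicate step; by contrast the homological case is comparatively soft, reducing to generation of an integral orthogonal group by transvections once $\Gamma$ is known to span $H_1(C_0,\Z)$.
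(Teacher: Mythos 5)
Your overall architecture matches the paper's: establish the inclusion $\im(\mu)\subseteq\MCG(C_0,\LS_{|C_0})$ (the paper cites \cite[Proposition 2.7]{CL}; your flat-family argument is the same idea), then prove surjectivity by showing that Dehn twists along vanishing cycles generate the spin mapping class group. The difference in how you produce vanishing cycles (tropical degenerations and a filling configuration $\Gamma$, versus the paper's citation of Theorem B and Section 10 of \cite{Salter2}, which yields that \emph{every} admissible twist lies in $\im(\mu)$) is a legitimate alternative route, though it forces you to verify $q$-values and a filling property configuration by configuration rather than quantifying over all admissible curves at once.

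The genuine gap is your step (ii). You write that surjectivity ``would rest on a generation result for the spin mapping class group by Dehn twists along curves of admissible $q$-value'' and that making this work is ``the delicate step'' --- but you never prove that result, and it is precisely the mathematical heart of the theorem (Theorem \ref{thm:spin} of the paper). It is not a spin-refined Humphries/chain argument: the paper's proof goes through the exact sequence $1\to\T\to\MCG(\Sigma,q)\to\spaut(H_1(\Sigma,\Z),q)\to 1$, handles the Torelli group via Johnson's genus-$1$ bounding-pair generators combined with a modified chain relation, and handles the symplectic quotient by combining Chevalley's generation of $\spaut(q,\Z/2\Z)$ by admissible transvections with Johnson's generation of the level-$2$ congruence subgroup by squares of transvections, the latter being absorbed via the genus-$2$ star relation (Lemma \ref{lem:square}). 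This also shows why your appeal to ``classical generation of this integral orthogonal group by transvections along vectors of admissible $q$-value (in the spirit of Dieudonn\'e's theorem)'' for the $n>2$ case is not available off the shelf over $\Z$: the paper has to \emph{prove} that statement (Proposition \ref{prop:spqtq}), and the proof passes through mapping-class-group relations, not purely through linear algebra. Finally, you never verify the genus hypothesis $g\geq 5$ under which all of these generation statements hold; the paper derives $g_\Li\geq 6$ from the evenness of $\da$, and without that bound the star-relation argument does not apply.
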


Recently, a lot of progress has been made in investigating  vanishing cycles in linear systems on toric surfaces. For that reason, we wish to contextualize the present work within the recent developments in the subject.

In \cite{Salter1}, Salter described the obstructions to the surjectivity of the monodromy map in the case of degree $d$ curves in $\cp{2}$. Moreover, he showed that the image of the monodromy is the spin mapping class group for $d=5$. Shortly after in \cite{CL}, the authors extended the catalogue of obstructions to any complete linear system on smooth toric surfaces and showed that the monodromy map is surjective when no obstruction shows up. Few months after appeared the first version of the present work. Finally in \cite{Salter2}, Salter completed the description of vanishing cycles of curves in smooth toric surfaces. 

During the revision process of the present work, it appeared that the original proof of Theorem \ref{thm:spinm} could be substantially simplified using the techniques of \cite{Salter2}. It is worth pointing out that, while Theorem \ref{thm:ispin} is covered by \cite[Theorem A]{Salter2} it is not the case for Theorem \ref{thm:spinm}. 

The missing piece is to show that the spin mapping class group is generated by Dehn twists, the so-called \emph{admissible twists} in \cite{Salter2}. We take care of this in Theorem \ref{thm:spin} that we prove in Section \ref{sec:generators}. We prove Theorems \ref{thm:ihyper} and \ref{thm:hyperm} in Section \ref{sec:hyper} and Theorems \ref{thm:ispin} and \ref{thm:spinm} in Section \ref{sec:spinst}. Finally, we provide explicit computations of the spin structure in the appendix.

\begin{ack} The authors are grateful to Denis Auroux,  Sylvain Courte, Simon K. Donaldson, Christian Haase, Tobias Ekholm, Ilia Itenberg, Grigory Mikhalkin, Nick Salter and all the participants of the learning seminar on Lefschetz fibrations of Uppsala. The authors are also indebted to an anonymous referee. Finally, the two authors were supported by the Knut and Alice Wallenberg Foundation.
\end{ack}

\tableofcontents

\section{Polygons and monodromy}\label{sec:poly}

In this section, we recall some of the results of \cite{CL} which we will need to prove Theorems \ref{thm:hyperm} and \ref{thm:spinm} and carry out the computations in the appendix.

Take a smooth and complete toric surface $\Xd$ associated to a fan in $\Z^2\otimes \R$ and an ample line bundle $\Li$ on it. Given such a pair, we can define a convex lattice polygon $\Delta\subset\R^2$ which is well-defined up to affine integer transformation (see \cite[\S 3]{CL}). Since $\Xd$ is smooth and $\Li$ is ample, the polygon $\Delta$ is smooth, that is any pair of primitive integer vectors directing two consecutive edges of $\Delta$ generates the lattice $\Z^2$.

Throuhout the paper, we assume that the arithmetic genus $g_{\Li}$ of the curves in $\LL$ is at least $1$. Recall by \cite{Kho} that $g_{\Li}$ is equal to the cardinality of $\itr(\Delta)\cap \Z^2$. Under the present assumption, we can define the interior polygon $\da$ as the convex hull of the non-empty set $\itr(\Delta)\cap \Z^2$. Moreover, the polygon $\da$ is smooth as soon as $\dim(\da)=2$ (see \cite[Proposition 3.3]{CL}). 

The description of line bundles on toric surfaces in terms of lattice polygons relates the interior polygon $\da$ to the adjoint line bundle $K_{\Xd}\otimes \Li$ (see \cite[Proposition 3.3]{CL}). In the present context, the adjoint line bundle $K_{\Xd}\otimes \Li$ is nef with empty base locus. Thus, the map $\phi_{K_{\Xd}\otimes \Li}:\Xd\rightarrow \LK^*$ is well-defined. Moreover, we have that $\dim(\im(\phi_{K_{\Xd}\otimes \Li})) = \dim(\da)$.

As we saw in \cite[Proposition 3.1]{CL}, the largest order $n$ of a root of $K_{\Xd}\otimes\Li$ is equal to the greatest common divisor of the integer lengths of the edges of $\da$. Recall that the integer length of a closed segment $\sigma \subset \R^2$ joining two lattice points is equal to $\vert \sigma \cap \Z^2\vert -1$.

\begin{Definition}\label{def:even}
A convex lattice polygon $\Delta\subset \R^2$ is \textbf{even}  if the greatest common divisor of the integer lengths of its edges is even.\\
In that case, a lattice point $p$ of $\Delta$ is \textbf{even} if for a vertex $v$ of $\Delta$ (and then for any) the difference $p-v$ has even coordinates. Otherwise, we say that $p$ is \textbf{odd}.
\end{Definition}

We now recall a combinatorial construction from \cite{CL} that allows us to describe a lot of elements in $\im(\mu)$. Define $\tilde{\Delta}$ to be the closure of $\Delta \setminus \Z^2$ in the real oriented blow-up of $\R^2$ at all the points of $\Z^2$. Denote by $\pi : \tilde{\Delta} \twoheadrightarrow \Delta$ the restriction of the blow-up and define $D_\bullet:=\pi^{-1}(\Delta \cap \Z^2) \subset \partial \tilde{\Delta}$. Now define $C_{\Delta}^{\circ}$ to be the gluing of two copies of $\tilde{\Delta}$ with opposite orientation along $D_\bullet$. Thus $C_{\Delta}^{\circ}$ is a surface of genus $g_{\Li}$ with $\vert\partial \Delta \cap \Z^2\vert$ many boundary components. This surface comes with a map $pr : C_{\Delta}^{\circ} \rightarrow \Delta$ defined as the composition of the projection $C_{\Delta}^{\circ} \twoheadrightarrow \tilde{\Delta}$ with the blow-up $\pi$. Finally, denote by $C_{\Delta}$ the compactification of $C_{\Delta}^{\circ}$ obtained by contracting each boundary component of $C_{\Delta}^{\circ}$ to a point, see Figure \ref{fig:cdelta}. 

Recall that a curve $C\in \LL$ is a simple Harnack curve if it is defined over $\R$ and if its real part $\R C$ satisfies  particular topological properties, see $e.g.$ \cite{Mikh}. In \cite[Proposition 4.6]{CL}, we defined a diffeomorphism $R_C$ between any smooth simple Harnack curve $C\in \LL$ and $C_{\Delta}$, essentially given by the gradient of the Ronkin function.

\begin{Definition}\label{def:segm}
A \textbf{primitive integer segment} is a segment in $\Delta$ that joins two lattice points and whose integer length is $1$. A primitive integer segment is a \textbf{bridge} if it joins a lattice point of $\partial \Delta$ to a lattice point of $\partial \da$ and does not intersect $\itr (\da)$.
\\
For any primitive integer segment $\sigma \subset \Delta$ and any smooth simple Harnack curve $C\in \LL$, define the simple closed curve $\delta_\sigma := (R_{C} \circ pr)^{-1}(\sigma) \subset C$ and $\tau_\sigma \in \MCG(C)$ to be the Dehn twist along  $\delta_\sigma$.\\
For any $v \in \da \cap \Z^2$, define the simple closed curve $\delta_v := (R_{C} \circ pr)^{-1}(v) \subset \R C$ and $\tau_v \in \MCG(C)$ to be the Dehn twist along  $\delta_v$. We will refer to the $\delta_v$'s as the \textbf{$A$-cycles} of $C$.
\end{Definition}

\begin{figure}[h]
\centering
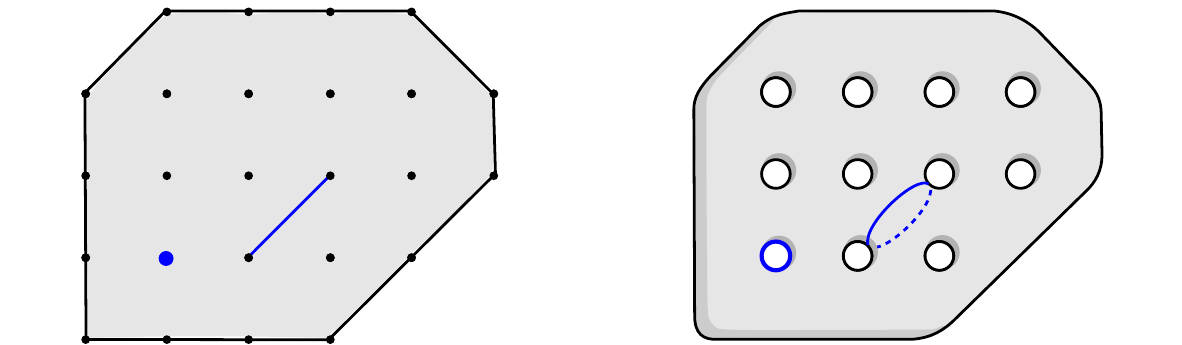
\caption{$C_\Delta$, $\delta_v$ and $\delta_\sigma$ from $\Delta$, $v$ and $\sigma$}
\label{fig:cdelta}
\end{figure}

The following results collected from \cite{CL} will already allow us to prove Theorem \ref{thm:hyperm}.

\begin{theorem}\label{thm:rappel}
Let $C_0$ be a smooth simple Harnack curve in $\LL$.
\begin{enumerate}
\item Any  $A$-cycle of $C_0$ is a vanishing cycle in the linear system $\LL$. (\cite[Theorem 3]{CL})
\item Let $\kappa\in\da$ be a vertex. If $\sigma$ is a bridge ending at $\kappa$, then $\tau_{\sigma}$ is in $\im(\mu)$.(\cite[Proposition 6.6]{CL})
 \item Let $\sigma$ be a primitive integer segment lying on an edge of $\da$. The Dehn twist $\tau_{\sigma}$ is in $\im(\mu)$.(\cite[Proposition 6.7]{CL})
 \end{enumerate}
\end{theorem}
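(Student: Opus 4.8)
All three statements are quoted from \cite{CL}; here is the plan I would follow to prove them, using Definition~\ref{def:segm} and the diffeomorphism $R_{C_0}\colon C_0\to C_\Delta$ attached to a smooth simple Harnack curve $C_0\in\LL$. For item (1), the plan is to exhibit, for each $v\in\da\cap\Z^2$, an explicit real one-parameter family of simple Harnack curves inside $\LL$ that contracts $\delta_v$ to a node. Simple Harnack curves in $\LL$ are tightly constrained: their amoebas have maximal area and are governed by the Ronkin function together with the combinatorics of $\Delta$ (the Mikhalkin--Rullg\aa rd description used already in \cite{CL}), and for each $v$ the complement of the amoeba has a bounded component $E_v$ whose boundary lifts under $R_{C_0}\circ pr$ to the oval $\delta_v\subset\R C_0$. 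I would then degenerate within this family so that $E_v$ shrinks to a point: the oval $\delta_v$ collapses to an isolated real point, i.e.\ a solitary real node, and since $\delta_v$ is non-separating on $C_0$ (the complement $C_\Delta\setminus\delta_v$ is connected, see Figure~\ref{fig:cdelta}) the limiting curve is an irreducible nodal curve of geometric genus $g_\Li-1$. Hence $\delta_v$ is a vanishing cycle.

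For items (2) and (3) only the weaker conclusion $\tau_\sigma\in\im(\mu)$ is claimed (this is \cite[Propositions 6.6 and 6.7]{CL}), and the plan is to reduce to a smaller linear system. Given a bridge $\sigma$ ending at a vertex $\kappa$ of $\da$, or a primitive integer segment $\sigma$ on an edge of $\da$, I would pick a lattice subpolygon $\Delta'\subseteq\Delta$ sharing the relevant corner or edge of $\da$, together with a degeneration of the curves of $\LL$ onto the curves of the linear system $\LL'$ associated with $\Delta'$; standard arguments near the special fibre then give an inclusion $\im(\mu')\subseteq\im(\mu)$ compatible with the maps $R_C$. Choosing $\Delta'$ so that $\sigma$ becomes an $A$-cycle of $\Delta'$ reduces both cases to item (1) for $\LL'$; alternatively one takes $\Delta'$ with $\LL'$ of small genus (a pencil, or a hyperelliptic system) where $\tau_\sigma\in\im(\mu')$ can be checked by hand, using the braid and chain relations between $\tau_\sigma$ and the twists along the $A$-cycles at the endpoints of the edge carrying $\sigma$. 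Pushing forward gives $\tau_\sigma\in\im(\mu)$.

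The main obstacle is item (1): the combinatorial picture makes it clear that $\delta_v$ \emph{should} be contractible, but promoting this to an honest degeneration \emph{inside} $\LL$ --- with control on the type of the limit (exactly one node, irreducible, geometric genus $g_\Li-1$) --- is where the real work sits and requires the full theory of simple Harnack curves and the properties of the Ronkin function. Once (1) is established, items (2) and (3) are largely bookkeeping; the one delicate point is arranging the subpolygon degenerations so that the monodromy of $\LL'$ genuinely injects into that of $\LL$.
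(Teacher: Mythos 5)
This statement is not proved in the paper at all: Theorem \ref{thm:rappel} is a verbatim recollection of \cite[Theorem 3, Propositions 6.6 and 6.7]{CL}, quoted precisely so that the present paper does not have to reprove it. So there is no internal proof to compare against; what can be assessed is whether your sketch is a viable reconstruction of the cited results. For item (1) it is: the argument of \cite{CL} is indeed to use the structure of the space of simple Harnack curves (the areas of the bounded complementary components of the amoeba are coordinates on that space, after Mikhalkin--Okounkov/Kenyon--Okounkov), to shrink the hole $E_v$ of order $v$ to a point, and to observe that the oval $\delta_v$ then collapses to a solitary real node of an irreducible curve in $\LL$. Your remarks on non-separation of $\delta_v$ and on the genus drop are correct, and your identification of item (1) as the place where the actual analytic work sits is accurate.

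For items (2) and (3), however, your plan has a genuine gap. The step ``standard arguments near the special fibre then give an inclusion $\im(\mu')\subseteq\im(\mu)$'' is not standard and is not available in this generality: the curves of the linear system $\LL'$ attached to a subpolygon $\Delta'$ are not naturally submanifolds of the curves of $\LL$, and there is no a priori homomorphism from $\pi_1(\LL'\setminus\D')$ to $\pi_1(\LL\setminus\D)$ compatible with the identifications $R_C$. What degenerations associated to subdivisions of $\Delta$ (patchworking, phase-tropical limits) actually provide is the transfer of \emph{individual} vanishing cycles: one exhibits an explicit nodal curve in $\LL$ itself, arising as a limit of phase-tropical curves adapted to $\sigma$, whose vanishing cycle is $\delta_\sigma$, and this is how \cite[Propositions 6.6 and 6.7]{CL} proceed (the hypotheses that $\sigma$ is a bridge at a vertex of $\da$, or lies on an edge of $\da$, are exactly what make such a nodal degeneration constructible). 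If you want to keep your reduction-to-$\Delta'$ formulation, you must replace the claimed inclusion of monodromy images by the construction of such a degeneration and an identification of its vanishing cycle with $\delta_\sigma$ via $R_{C_0}\circ pr$; as written, the ``bookkeeping'' for (2) and (3) conceals the entire content of those two propositions.
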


\begin{Remark}\label{rem:sympbas}
Notice that $\delta_v$ and $\delta_\sigma$ intersect if and only if $v$ is an end point of $\sigma$. In this case, $\delta_v$ intersects $\delta_\sigma$ transversally at one point.
\end{Remark}

The following results from \cite{CL} are not necessary to prove Theorem \ref{thm:spinm} as we will use the results of \cite{Salter2} instead. Nevertheless, they allow us to compute the quadratic form associated to the canonical spin structure, see Section \ref{sec:app}.

\begin{Proposition}[Propositions 7.13, 7.15 and 7.16 in \cite{CL}]\label{prop:rappel}
Assume that the largest order of a root of $K_{\Xd}\otimes \Li$ is $2$ (resp. is even). If $\sigma\subset \Delta$ is a primitive integer segment with one end being an even point of $\da$, then $\tau_{\sigma}$ is in $\im(\mu)$ (resp. $[\tau_{\sigma}]\in\im(\mmu)$). Otherwise $\tau_{\sigma}^2$ is in $\im(\mu)$ (resp.  $[\tau_{\sigma}]^2\in\im(\mmu)$).
\end{Proposition}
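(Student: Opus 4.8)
Although the statement is recalled verbatim from \cite{CL} (Propositions 7.13, 7.15 and 7.16), let me indicate the line of attack one would follow. The engine is the change-of-coordinates principle: $\im(\mu)$ is a subgroup of $\MCG(C_0)$, hence stable under conjugation by its own elements, so that $\tau_\delta\in\im(\mu)$ and $\varphi\in\im(\mu)$ force $\tau_{\varphi(\delta)}=\varphi\,\tau_\delta\,\varphi^{-1}\in\im(\mu)$; the same holds for $\im(\mmu)$ inside $\spaut(H_1(C_0,\Z))$. Theorem \ref{thm:rappel} supplies an abundant family of seeds: the $A$-cycle twists $\tau_v$ for $v\in\da\cap\Z^2$, the twists $\tau_\sigma$ along primitive segments lying on an edge of $\da$, and the twists $\tau_\sigma$ along bridges ending at a vertex of $\da$. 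The goal is to reach $\tau_\sigma$, resp.\ $\tau_\sigma^2$, for an arbitrary primitive integer segment $\sigma$ by conjugating seeds by words in seeds.

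The mechanism is a sequence of local moves that modify $\sigma$ across the unimodular triangles of a unimodular triangulation of $\Delta$ refining $\da$ and having $\sigma$ as an edge. For a unimodular triangle with vertices $p,q,r$, the preimage of the triangle in $C_\Delta$ is a pair of pants bounded by $\delta_{[p,q]},\delta_{[q,r]},\delta_{[p,r]}$, and together with the corner $A$-cycles $\delta_p,\delta_q,\delta_r$ --- which meet these three curves once each by Remark \ref{rem:sympbas} --- it assembles into a small explicit subsurface of $C_\Delta$ inside which a Dehn-twist relation expresses any one of $\tau_{[p,q]},\tau_{[q,r]},\tau_{[p,r]}$ in terms of the other two and of $\tau_p,\tau_q,\tau_r$. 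Using the connectivity of the dual graph of the triangulation, one propagates membership in $\im(\mu)$ from the seeds along the boundary of $\da$ inward, reducing $\sigma$ to a segment on an edge of $\da$ or to a bridge at a vertex of $\da$; this establishes $\tau_\sigma\in\im(\mu)$ \emph{provided every seed invoked along the reduction contributes a genuine twist rather than only its square}. Proving that such a reduction exists, and controlling which seeds it needs, is the technical heart of the argument and leans on the smoothness of $\Delta$ and of $\da$.

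Whether a seed contributes $\tau$ or only $\tau^2$ is governed by the even/odd dichotomy of Definition \ref{def:even}. When $n=2$ the lattice points of $\da$ carry a $\Z/2$-coloring, and the class of $\delta_\sigma$ modulo $2$ --- equivalently the value on $\delta_\sigma$ of the quadratic form of the spin structure $\LS_{|C_0}$ --- is read off from the colors of the endpoints of $\sigma$. If $\sigma$ has an even endpoint on $\da$, the reduction can be routed through seeds of the compatible parity and delivers $\tau_\sigma\in\im(\mu)$; otherwise the reduction is still available after squaring, and one carries $\tau_\sigma^2$ along it via $\tau_{\varphi(\delta)}^2=\varphi\,\tau_\delta^2\,\varphi^{-1}$, obtaining the weaker $\tau_\sigma^2\in\im(\mu)$ --- the easier, purely constructive case. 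The main obstacle is thus the parity-respecting reduction lemma above. The ``resp.\ $n$ even'' statement is obtained by running the same bookkeeping at the algebraic level inside $\spaut(H_1(C_0,\Z))$, now with the order-two reduction $\LS_{|C_0}^{\otimes \frac{n}{2}}$ in place of $\LS_{|C_0}$ and using that the square of a transvection acts trivially modulo $2$.
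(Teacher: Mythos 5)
This proposition is not proved in the present paper at all: it is recalled verbatim from \cite{CL} (Propositions 7.13, 7.15 and 7.16), where the argument rests on explicitly constructed degenerations of simple Harnack curves (tropical/patchworking methods producing concrete loops around the discriminant whose monodromy is computed), not on mapping class group relations. So your sketch is necessarily a different route, and as a proof it has a fatal gap precisely where you flag ``the technical heart.'' The conjugation mechanism $\tau_{\varphi(\delta)}=\varphi\tau_\delta\varphi^{-1}$ cannot reach the case of a segment $\sigma$ with no even endpoint. Indeed, the monodromy preserves the spin structure $\LS_{|C_0}$, so $\im(\mu)<\MCG(C_0,q)$ and every $\varphi\in\im(\mu)$ preserves the quadratic form $q$; all the seeds supplied by Theorem \ref{thm:rappel} ($A$-cycles, bridges ending at a vertex of $\da$, primitive segments on an edge of $\da$) satisfy $q=1$ (vertices of $\da$ are even, edges of $\da$ have even integer length, cf.\ Lemma \ref{lem:qc} and Proposition \ref{prop:q}), whereas $q([\delta_\sigma])=0$ when $\sigma$ has no even endpoint. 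Hence no conjugate of a seed is ever isotopic to such a $\delta_\sigma$, and ``squaring the reduction'' does not help: you would need a first seed whose \emph{square only} is known to lie in $\im(\mu)$, and Theorem \ref{thm:rappel} provides none. (In the present paper that square is ultimately obtained from the genus-$2$ star relation, Lemma \ref{lem:square}, combined with Salter's result that admissible twists lie in $\im(\mu)$ --- but that is exactly the machinery your sketch does not invoke, and it postdates \cite{CL}.)

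Two further points. First, the ``local Dehn-twist relation'' in the pair of pants over a unimodular triangle is asserted but never identified; there is no standard relation expressing $\tau_{[p,r]}$ in terms of $\tau_{[p,q]}$, $\tau_{[q,r]}$ and the corner twists $\tau_p,\tau_q,\tau_r$ alone (the lantern and star relations require different configurations), so the propagation step is not merely technical but unsubstantiated. Second, even in the even-endpoint case the argument is circular in spirit: to conjugate a seed curve onto $\delta_\sigma$ you need an explicit $\varphi\in\im(\mu)$ realizing that change of coordinates, and producing such elements is essentially the content of the theorem being used. The correct takeaway is that Proposition \ref{prop:rappel} is an input imported from \cite{CL}, proved there by constructing degenerations, and is logically prior to (not derivable from) the group-theoretic generation results of Sections \ref{sec:generators}--\ref{sec:spinst}.
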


\section{The hyperelliptic case}\label{sec:hyper}

 In this section we prove Theorems \ref{thm:hyperm} and \ref{thm:ihyper}.  In particular, we assume that the dimension of $\da$ is $1$ (see \cite[Proposition 3.3]{CL}). We start by describing which linear systems satisfy this condition.

For any non-negative integer $\alpha$, let $\F_\alpha$ be the Hirzebruch surface obtained as the projectivization of $\OO_{\cp{1}}\oplus\OO_{\cp{1}}(\alpha)\rightarrow \cp{1}$ and denote by $\pi_\alpha :\F_\alpha\rightarrow \cp{1}$ the induced projection. Its Picard group is free of rank $2$ generated by the fiber class $f$ and the tautological class $h$ and its canonical class is $-2h+(\alpha-2)f$.

 A line bundle $\Li$ over $\F_\alpha$ is ample if and only if its class is given by $mh+nf$ with $m,n>0$. Let $\Li_n$ be the line bundle of class $2h+nf$ for $n>0$. Then $K_{\F_\alpha}\otimes \Li_n$ is not ample as its intersection with $f$ vanishes. Moreover, as soon as $\alpha+n-1>1$, the dimension of $\Delta_{K_{\F_\alpha}\otimes \Li_n}$ is $1$. In fact, this is essentially the only case where this last property can happen, as shown by Proposition \ref{onedim} below.

 \begin{Proposition}\label{onedim}
   Let $\Xd$ be a smooth and complete toric surface and let $\Li$ be an ample line bundle on $\Xd$. Denote by $\Delta$ the associated convex lattice polygon. Assume that the dimension of $\da$ is $1$. Then there exists $\alpha, n\in\N$ with $n>0$ and $\alpha+n-1 = g_{\Li}$ and a map $\Xd\rightarrow \F_\alpha$ which is either
   \begin{enumerate}
   \item an isomorphism, and then $\LL = \LLn$,
   \item a blow-up at one point $p\in\F_\alpha$, and then $\LL$ is the proper transform of
\[
\{[s]\in \LLn\ |\ s(p) = 0\}\subset \LLn
\]
under the blow-up,
   \item or a blow-up at two distinct points $p, q\in \F_\alpha$, and then $\LL$ is the proper transform of
\[
\{[s]\in \LLn\ |\ s(p) = 0, s(q)=0\}\subset \LLn
\]
under the blow-up.
   \end{enumerate}\qed
 \end{Proposition}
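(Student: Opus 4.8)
The statement is purely combinatorial: $\dim\da=1$ means the interior lattice points of $\Delta$ all lie on a single line segment $I$, say with $g_\Li = |I\cap\Z^2|$ lattice points, hence $\da=I$ has integer length $g_\Li-1$. The idea is to reconstruct $\Delta$ from this segment and its "one-step neighbourhood", and to recognize the resulting polygons as (iterated blow-downs of) the polygons $\Delta_n$ of the Hirzebruch surfaces $\F_\alpha$. Concretely, up to $SL_2(\Z)\ltimes\Z^2$ I would place $\da$ along the $x$-axis from $(0,0)$ to $(g_\Li-1,0)$. Every lattice point of $\Delta$ not on $\da$ must then have $y$-coordinate in $\{-1,0,1\}$ (otherwise an interior point off the $x$-axis would appear), and since $\da=\itr(\Delta)\cap\Z^2$, none of those off-axis points can be interior. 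Splitting $\Delta$ into its part above and below the $x$-axis, each part is a lattice polygon of "height one", i.e. a region between the segment $\da$ (possibly extended) and a lattice path at height one; smoothness of $\Delta$ at each vertex pins down how far the top and bottom strips may extend beyond the endpoints of $\da$.

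The key steps, in order: (1) Normalize so $\da$ lies on the $x$-axis as above, and show all of $\Delta$ lies in the strip $-1\le y\le 1$. (2) Analyze the intersection $\Delta\cap\{y=1\}$: it is a segment $[a,b]\times\{1\}$; smoothness of $\Delta$ at the vertices on the line $y=1$ and at the endpoints $(0,0),(g_\Li-1,0)$ of $\da$ forces the slopes of the two edges of $\Delta$ going from height $1$ down to height $0$ to be $\pm1$ (a smooth vertex adjacent to a horizontal edge has the neighbouring edge primitive-direction $(\pm1,1)$), and similarly analyze $\Delta\cap\{y=-1\}$. (3) Conclude that the "generic" shape — when $\Delta$ actually reaches $y=-1$ with full-length bottom edge — is exactly the trapezoid $\Delta_n$ associated to $\Li_n$ on $\F_\alpha$ for suitable $\alpha,n$ with $\alpha+n-1=g_\Li$; the degenerate possibilities, where the bottom strip is "clipped" at one or both ends (the bottom edge is shorter, or $\Delta$ only reaches $y=-1$ over a sub-segment), correspond combinatorially to chopping off one or two smooth corners of $\Delta_n$. (4) Translate "chopping a smooth corner of the moment polygon" into "blowing up a torus-fixed point of $\F_\alpha$", using the standard dictionary between corner truncations of smooth polygons and toric blow-ups, and identify the induced $\Xd\to\F_\alpha$ and the linear subsystem of $\LLn$ cut out by vanishing at the blown-up point(s); checking the three cases (no truncation / one / two) against the three alternatives in the statement finishes the proof. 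Throughout one also records $\alpha+n-1 = g_\Li = |\itr\Delta\cap\Z^2|$, which is immediate since the interior points are exactly the $g_\Li$ points of $\da$.

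**Main obstacle.** The bookkeeping in step (2)–(3): carefully enumerating which truncations of $\Delta_n$ are possible while keeping $\da$ unchanged and keeping $\Delta$ smooth, and making sure no other polygon shapes sneak in (for instance, ruling out that $\Delta$ could be one-dimensional itself, or that both top and bottom strips are clipped at the same endpoint in an incompatible way, or that a truncation could be so deep that it changes $\itr(\Delta)$). One must verify that any corner one is allowed to cut is a \emph{smooth} corner of $\Delta_n$ (so the blow-up is of a smooth toric surface at a reduced point) and that cutting it does not touch $\da$; this is where smoothness of $\Delta$, the hypothesis $g_\Li\ge1$, and the constraint $\alpha+n-1=g_\Li>?$ all get used. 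The identification of the linear system as the proper transform of $\{s(p)=0\}$ (resp. $\{s(p)=s(q)=0\}$) is then a direct consequence of how sections/lattice points behave under blow-up: removing the truncated lattice points from $\Delta_n$ is exactly imposing vanishing of the section at the corresponding fixed point.
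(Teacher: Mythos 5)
The paper does not actually prove this proposition: it is stated with a \qed{} and the following sentence refers to \cite[\S 4.3]{Koe} for the classification of smooth polygons $\Delta$ with $\dim\da=1$. Your plan is essentially a reconstruction of that cited classification, so the strategy is the right one: normalize $\da$ onto the $x$-axis, confine $\Delta$ to the strip $\lvert y\rvert\le 1$, recognize the result as the trapezoid of $(\F_\alpha,\Li_n)$ with at most two smooth corners truncated at lattice distance one, and translate corner truncations into toric blow-ups and into vanishing conditions on sections. The first and last steps of this outline are sound.

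However, two of your intermediate claims are gaps. First, the strip bound ``every lattice point of $\Delta$ has $y\in\{-1,0,1\}$, otherwise an interior point off the axis would appear'' is true but not justified by that parenthetical: a lattice point at height $2$ does not directly produce an interior lattice point at height $1$. A correct argument uses concavity of the slice length $\ell(y)=\lvert\Delta\cap\{y=\mathrm{const}\}\rvert$: any non-supporting slice at integer height $y\neq 0$ has $\ell(y)\le 1$ (its relative interior contains no lattice point), while $\ell(0)>g_{\Li}-1$ since the endpoints of $\da$ are interior; if $\Delta$ reached height $2$, concavity on $\{0,1,2\}$ would give $\ell(0)\le 2\ell(1)-\ell(2)\le 2$, a contradiction for $g_{\Li}\ge 3$, and the residual case $g_{\Li}=2$ needs a further argument using smoothness at the topmost and height-one vertices. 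Second, the assertion that ``a smooth vertex adjacent to a horizontal edge has the neighbouring edge primitive-direction $(\pm1,1)$'' is false: the determinant condition against $(1,0)$ only forces the neighbouring primitive direction to be $(k,\pm1)$ for an arbitrary $k\in\Z$, and this $k$ is precisely where the parameter $\alpha$ enters --- the slanted edge of the trapezoid of $(\F_\alpha,\Li_n)$ has primitive direction $(\alpha,-1)$. Followed literally, your step (2) would only ever recover $\F_0$ and $\F_1$. Finally, the bookkeeping you yourself flag (the top edge cannot degenerate to a single vertex, which is where $n>0$ comes from; exactly which pairs of corners of the trapezoid can be cut simultaneously without violating smoothness or shrinking $\da$) is where most of the actual work lies; it is carried out in \cite{Koe} and would need to be written out for your proof to be complete.
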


For a proof of Proposition \ref{onedim}, we refer the reader to \cite[\S 4.3]{Koe} where the classification of polygons $\Delta$ such that the dimension of $\da$ is $1$ can be found.

 \begin{proof}[Proof of Theorem \ref{thm:hyperm}]
Consider first the case of an Hirzebruch surface $\pi_\alpha:\F_\alpha\rightarrow\cp{1}$ and an ample line bundle $\Li_n= \OO_{\F_\alpha}(2h+nf)$ on it, for $n>0$. All the smooth curves in $\LLn$ are hyperelliptic. More precisely, let $\CC_n\subset \LLn\times\F_\alpha$ denote the universal curve over $\LLn\setminus \D$. Then $\pi_\alpha$ induces a map $\CC_n\rightarrow (\LLn\setminus\D) \times \cp{1}$ which is of degree $2$. It follows from \cite[Theorem 5.5]{KlL} that there exists an involution $I$ of $\CC_n$ over $\LLn\setminus\D$ which restricts to the hyperelliptic involution on each curve. 

Take a smooth curve $C_0\in \LLn$ and denote by $\iota$ the restriction of $I$ to $C_0$. Let $\gamma : [0,1]\rightarrow \LLn\setminus \D$ be a smooth path starting and ending at $C_0$ and choose a smooth trivialisation $\Phi : C_0\times [0,1]\rightarrow \gamma^*\CC_n$ with $\Phi(.,0)=\id$, so that the monodromy associated to $\gamma$ is $\phi = \Phi(.,1)$. The map $\Phi^{-1}\circ I\circ \Phi$ is an isotopy from $\iota$ to $\phi^{-1}\circ\iota\circ\phi$ so that the class of $\phi$ in $\MCG(C_0)$ commutes with the class of $\iota$. Thus the image of the geometric monodromy $\mu$ is a subgroup of the hyperelliptic mapping class group $\MCG(C_0,\iota)$.

Take now any smooth complete toric surface $\Xd$ and ample line bundle $\Li$ and let $\Delta$ be the associated polygon. Assume that the interior polygon $\da$ is of dimension $1$. Using Proposition \ref{onedim} and the preceding argument about the Hirzebruch surfaces, we conclude that in this case also any smooth curve $C_0\in\LL$ is hyperelliptic with involution $\iota$ and that the image of $\mu$ is a subgroup of $\MCG(C_0,\iota)$.

Let us show now that the image is in fact the whole hyperelliptic group. Fix a smooth Harnack curve $C_0\in\LL$. From Proposition \ref{onedim}, we can assume that the polygon $\Delta$ has a vertex at $(0,0)$ with adjacent edges directed by $(0,1)$ and $(1,0)$ and that $\da$ is the segment joining $(1,1)$ to $(g_{\Li},1)$. The point $(g_{\Li}+1,1)$ is also on the boundary of $\Delta$. Denote by $\sigma_i$ the primitive integer segments $[(i,1),(i+1,1)]$, $i=0,\ldots,g_{\Li}$ and by $v_i$, $i=1,\ldots,g_{\Li}$, the points $(i,1)$. It follows from Theorem \ref{thm:rappel} that $\tau_{\sigma_i}\in\im(\mu)$ for all $i = 0,\ldots,g_{\Li}$ and that $\tau_{v_i}\in\im(\mu)$ for $i=1,\ldots,g_{\Li}$. Notice also that each curve $\delta_{\sigma_i}$ and $\delta_{v_i}$ is isotopic to its image by $\iota$.

Let $j = \tau_{\sigma_0}\tau_{v_1}\tau_{\sigma_1}\ldots\tau_{v_{g_{\Li}}}\tau_{\sigma_{g_{\Li}}}\tau_{\sigma_{g_{\Li}}}\tau_{v_{g_{\Li}}}\ldots\tau_{\sigma_1}\tau_{v_1}\tau_{\sigma_0}$. Using the Alexander method (see \cite[Proposition 2.8]{farbmarg}), we know that $\iota \circ j$ is isotopic to the identity. In particular, $\MCG(C_0,\iota)=\MCG(C_0,j)$. Moreover, the group $\MCG(C_0,j)$ is generated by the Dehn twists $\tau_{\sigma_0},\tau_{v_1},\tau_{\sigma_1},\ldots,$ $\tau_{v_{g_{\Li}}},\tau_{\sigma_{g_{\Li}}}$ (see \cite[Theorem 9.2]{farbmarg}). Thus $\im(\mu) = \MCG(C_0,\iota)$.
 \end{proof}

 To finish the proof of Theorem \ref{thm:ihyper}, we use Lemma \ref{lem:symcurves} below.

 \begin{Lemma}\label{lem:symcurves}
   Let $C$ be a compact Riemann surface of genus $g\geq 2$ with an hyperelliptic involution $\iota : C\rightarrow C$. Let $\delta$ and $\delta'$ be non-separating smooth simple closed curves on $C$. 

If $\delta$ is isotopic to $\iota(\delta)$ then there exists a smooth simple closed curve $\delta_0$ on $C$ which is invariant under $\iota$ and isotopic to $\delta$. 

If $\delta'$ is also isotopic to $\iota(\delta')$, then there exists a diffeomorphism $\phi$ of $C$ sending $\delta$ to $\delta'$ and such that $[\phi]\in\MCG(C,\iota)$.
 \end{Lemma}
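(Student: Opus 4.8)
The plan is to reduce everything to standard facts about hyperelliptic involutions via the quotient orbifold. Recall that for a hyperelliptic Riemann surface $C$ of genus $g\geq 2$, the quotient $C/\inv$ is a sphere $S$ and the projection $p:C\rightarrow S$ is a double cover branched over $2g+2$ points $B\subset S$. A key classical fact (see e.g. \cite{farbmarg}) is that every isotopy class of non-separating simple closed curve on $C$ which is fixed by $\inv$ is the preimage $p^{-1}(c)$ of a simple closed curve $c$ on $S$ that either (a) encloses an even number $2k$ of branch points with $2\leq 2k\leq g$, in which case $p^{-1}(c)$ is connected, or (b) is such that $p^{-1}(c)$ is a single curve invariant under $\inv$. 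Conversely, such preimages are $\inv$-invariant simple closed curves. So the first claim — that an $\inv$-isotopy-invariant class contains a genuinely $\inv$-invariant representative — will follow once I know that every $\inv$-invariant isotopy class has an invariant representative; this is a statement about isotopy classes fixed by a finite-order mapping class, and the cleanest route is to invoke the equivariant/Nielsen realization-type result in the hyperelliptic setting, namely that a mapping class commuting with $\inv$ has a representative commuting with $\inv$, applied to the Dehn twist $\tau_\delta$ (which commutes with $\inv$ precisely because $\delta$ is $\inv$-invariant up to isotopy, since $\inv \tau_\delta \inv^{-1} = \tau_{\inv(\delta)}$).

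First I would make the first paragraph precise: given $\delta$ with $\delta$ isotopic to $\inv(\delta)$, the identity $\inv\circ\tau_\delta\circ\inv^{-1}=\tau_{\inv(\delta)}=\tau_\delta$ in $\MCG(C)$ shows $[\tau_\delta]\in\MCG(C,\inv)$. Now I use that the hyperelliptic mapping class group $\MCG(C,\inv)$ is isomorphic to (a central $\Z/2$ extension of) the mapping class group of the sphere with $2g+2$ marked points $\MCG(S,B)$, via the correspondence of curves above. Under this isomorphism $\tau_\delta$ corresponds to a mapping class of $(S,B)$ fixing some isotopy class of embedded curve or arc; pulling back a genuine representative on $S$ and lifting gives an honest $\inv$-invariant simple closed curve $\delta_0$, and because lifting is compatible with isotopy, $\delta_0$ is isotopic to $\delta$. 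Alternatively — and this is perhaps more self-contained — I would argue directly: put a hyperbolic metric on $C$ for which $\inv$ is an isometry (such a metric exists since $\inv$ has finite order; average any metric over the group $\langle\inv\rangle$), and replace $\delta$ by its geodesic representative $\delta_0$. Geodesic representatives in a fixed isotopy class are unique, and $\inv$ is an isometry with $\inv(\delta_0)$ the geodesic representative of $\inv(\delta)$, which is isotopic to $\delta$; by uniqueness $\inv(\delta_0)=\delta_0$, so $\delta_0$ is the desired $\inv$-invariant representative. This hyperbolic-geometry argument settles the first claim cleanly.

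For the second claim, apply the first to both $\delta$ and $\delta'$ to get $\inv$-invariant representatives $\delta_0$ and $\delta_0'$. Since $\delta,\delta'$ are non-separating, so are $\delta_0,\delta_0'$; I then want a diffeomorphism $\phi$ with $\phi(\delta_0)=\delta_0'$ that commutes with $\inv$. I would use the change-of-coordinates principle on the sphere: $\delta_0$ and $\delta_0'$ project to simple closed curves $c_0=p(\delta_0)$ and $c_0'=p(\delta_0')$ on $S$, each either disjoint from $B$ separating $B$ into parts of sizes $(2k,2g+2-2k)$ and $(2k',2g+2-2k')$, or passing through branch points. The non-separating condition on $C$ translates into a condition on the partition of $B$ (for the unbranched case, $p^{-1}(c_0)$ connected forces the even split with $2\leq 2k\leq g$; one also checks that $\delta_0$ and $\delta_0'$ being non-separating and $\inv$-invariant forces the same combinatorial type). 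By the change-of-coordinates principle for marked spheres there is a diffeomorphism $\psi$ of $(S,B)$ with $\psi(c_0)=c_0'$; I would lift $\psi$ to a diffeomorphism $\phi$ of $C$ commuting with $\inv$ (lifts exist because $\psi$ preserves $B$, hence the branch locus, and one chooses the lift fixing a chosen point in a fiber; the lift automatically commutes with the deck transformation $\inv$), and $\phi(\delta_0)=\delta_0'$ as desired. Finally $[\phi]\in\MCG(C,\inv)$ since $\phi\inv=\inv\phi$ on the nose.

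The main obstacle I anticipate is the second-claim combinatorics: I must verify that ``non-separating on $C$'' pins down the type of the quotient curve $c_0$ tightly enough that change-of-coordinates on $(S,B)$ applies, i.e. that $\delta_0$ and $\delta_0'$ give the \emph{same} partition of the branch points (possibly after noting $k$ and $g+1-k$ give conjugate, hence equivalent, configurations), and that the lift of $\psi$ can indeed be taken to commute with $\inv$ rather than merely covering $\psi$. Once the dictionary between $\inv$-invariant simple closed curves on $C$ and simple closed curves on $(S,B)$ is set up carefully — including the two cases of branched vs. unbranched quotient curves — both claims reduce to the classical change-of-coordinates principle and the uniqueness of geodesic representatives, so the real work is bookkeeping with that dictionary.
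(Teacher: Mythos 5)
Your argument follows the same route as the paper on both counts: the first claim is settled exactly as in the paper, by averaging a metric so that $\iota$ is a hyperbolic isometry and invoking uniqueness of the geodesic representative, and the second claim is settled by passing to the quotient sphere with $2g+2$ marked points, applying change of coordinates there, and lifting. The one point where your write-up needs repair is the dictionary in the second part, which you yourself flag as the anticipated obstacle. Your ``case (a)'' does not occur: if an $\iota$-invariant simple closed curve $\delta_0$ avoids the Weierstrass points, then $\iota$ acts freely on $\delta_0$, so $\delta_0$ double-covers its image $c\subset S\setminus B$; connectedness of $\delta_0=p^{-1}(c)$ forces $c$ to enclose an odd number of branch points on each side, and then both complementary regions have connected preimage, so $\delta_0$ \emph{separates} $C$. (A curve $c$ enclosing an even number of branch points has disconnected preimage whose two components are swapped by $\iota$, so no single component is invariant.) Hence every $\iota$-invariant \emph{non-separating} simple closed curve meets exactly two Weierstrass points ($\iota$ restricted to a circle is either free or a reflection with two fixed points) and projects to an embedded arc joining two marked points and avoiding the rest. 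This is the fact the paper uses, and it eliminates all the partition bookkeeping you were worried about: any two such arcs are related by a diffeomorphism of the marked sphere, which lifts to a diffeomorphism of $C$ commuting with the deck transformation $\iota$. With that correction your proof is complete and coincides with the paper's.
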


 \begin{proof}
   Since $C$ is hyperelliptic, $\iota$ is an isometry for its hyperbolic metric. Denote by $\delta_0$ the unique geodesic in the isotopy class of $\delta$. Then $\iota(\delta_0)$ is again a geodesic and is isotopic to $\delta_0$ by assumption. By unicity of the geodesic, $\delta_0 =\iota(\delta_0)$.

For the second part of the Lemma, we can assume that both $\delta$ and $\delta'$ are invariant by $\iota$. The quotient $C/\iota$ is a sphere with $2g+2$ marked points. The images of $\delta$ and $\delta'$ in this quotient are two simple arcs joining two marked points. Using a diffeomorphism of the sphere permuting the marked points, we can send one of those arc on the other. Since such a diffeomorphism lifts to a diffeomorphism of $C$ which is invariant by $\iota$, this concludes the proof.
 \end{proof}

 \begin{proof}[Proof of Theorem \ref{thm:ihyper}]
Let $C_0$ be a smooth Harnack curve in $\LL$. We know from Theorem \ref{thm:hyperm} that $C_0$ admits an hyperelliptic involution $\iota$ and that any vanishing cycle on $C_0$ has to be invariant by $\iota$.

On the other hand, we know from Theorem \ref{thm:rappel} that any $A$-cycle on $C_0$ is a vanishing cycle. Choose one of those and denote it by $\delta$. Now, if $\delta'\subset C_0$ is another simple closed curve which is isotopic to $\iota(\delta')$, then by Lemma \ref{lem:symcurves}, there exists $[\phi]\in \im(\mu)$ such that $\phi(\delta) = \delta'$. This implies that $\delta'$ is also vanishing cycle (see \cite[Proposition 3.23]{Voisin2}), which concludes the proof of Theorem \ref{thm:ihyper}.
 \end{proof}

\section{Generators of the spin mapping class group}\label{sec:generators}

This section is a preparation for proving Theorems \ref{thm:ispin} and  \ref{thm:spinm}.  For the moment, we forget about the specific case of curves in toric surfaces and aim to prove some general statements about generating sets of the spin mapping class group.  

Consider an orientable surface $\Sigma$ of genus $g$, together with a spin structure $S$ on it. Recall from \cite[Theorem 3A]{johnson} that there is a natural bijection between the set of spin structures on $\Sigma$ and the set of quadratic forms on $H_1(\Sigma,\Z/2\Z)$ associated to the \emph{mod $2$} intersection pairing $\ipd{.}{.}$. Let us recall that  a quadratic form $q : H_1(\Sigma,\Z/2\Z) \rightarrow \Z/2\Z$ is a map such that for any $a$ and $b$ in $H_1(\Sigma,\Z/2\Z)$, we have 
$$q(a+b) = q(a)+q(b)+\ipd{a}{b}.$$
Denote by $\scc(\Sigma)$ the set of isotopy classes of oriented simple closed curves in $\Sigma$. By composition with the natural maps 
\[ \scc(\Sigma) \rightarrow H_1(\Sigma,\Z) \rightarrow H_1(\Sigma,\Z/2\Z), \]
we define the quadratic form $q$ on both $\scc(\Sigma)$ and $H_1(\Sigma,\Z)$ and still denote it (somewhat abusively) by $q$. Note that the above map $\scc(\Sigma) \rightarrow H_1(\Sigma,\Z/2\Z)$ descends in fact to a map on unoriented simple closed curves on $\Sigma$, and so does $q$. Note also that $\scc(\Sigma)$ is acted upon by $\MCG(\Sigma)$. In turn, the mapping class group $\MCG(\Sigma)$ acts on the set of quadratic forms on $\scc(\Sigma)$ by pre-composition.  

From now on, the letter $q$ refers to the quadratic form associated to the spin structure $S$. Define the \textbf{spin mapping class group} as
\[\MCG(\Sigma,q):=\left\lbrace \phi \in \MCG(\Sigma) \, \vert \, q \circ \phi = q  \right\rbrace.\]

Our interest in this section is to find generators for the spin mapping class group. To that aim, we introduce the notion of admissible twist, following \cite{Salter2}.

\begin{Definition}\label{def:adm}
A simple closed curve $c \subset \Sigma$ is an \textbf{admissible curve} if it is non-separating  and if $q(c)=1$. For any simple closed curve $c \subset \Sigma$, denote by $\tau_c \in \MCG(\Sigma)$ the Dehn twist  along $c$.  A Dehn twist $\tau_c \in \MCG(\Sigma)$ is an \textbf{admissible twist} if $c$ is admissible. Finally, denote by $\tq < \MCG(\Sigma)$ the subgroup generated by all admissible twists.
\end{Definition}

Our interest for admissible curves is motivated by the following.

\begin{Lemma}\label{lem:adm}
A simple closed curve $c \subset \Sigma$ is admissible  if and only if $\tau_c \in \MCG(\Sigma,q)$. In particular, we have $\tq < \MCG(\Sigma,q)$.
\end{Lemma}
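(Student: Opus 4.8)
The plan is to prove the equivalence ``$c$ admissible $\iff \tau_c \in \MCG(\Sigma,q)$'' via the classical formula describing how a Dehn twist acts on homology, and then transport that action to the quadratic form $q$. Recall the Picard--Lefschetz formula: for any oriented simple closed curve $c$ and any class $a \in H_1(\Sigma,\Z/2\Z)$, the Dehn twist acts by $(\tau_c)_*(a) = a + \ipd{a}{[c]}\,[c]$ (over $\Z/2\Z$ the sign is irrelevant, which is exactly why $q$ only sees the unoriented curve). So I would first record this, then compute $q\bigl((\tau_c)_*(a)\bigr)$ using the defining identity $q(x+y) = q(x) + q(y) + \ipd{x}{y}$ of a quadratic form.

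The main computation is then short. Writing $\epsilon = \ipd{a}{[c]} \in \Z/2\Z$, we get
\[
q\bigl((\tau_c)_* a\bigr) = q\bigl(a + \epsilon[c]\bigr) = q(a) + \epsilon\, q([c]) + \epsilon\,\ipd{a}{[c]} = q(a) + \epsilon\bigl(q([c]) + \epsilon\bigr).
\]
Since $\epsilon^2 = \epsilon$ in $\Z/2\Z$, the correction term is $\epsilon\,q([c]) + \epsilon = \epsilon\bigl(q([c]) + 1\bigr)$. Hence $q \circ (\tau_c)_* = q$ for all $a$ if and only if $\epsilon\bigl(q([c])+1\bigr) = 0$ for every $a$, i.e. $q([c]) + 1 = 0$, i.e. $q(c) = 1$ --- provided there exists some $a$ with $\ipd{a}{[c]} = 1$, which is precisely the condition that $[c] \neq 0$ in $H_1(\Sigma,\Z/2\Z)$, i.e. that $c$ is non-separating. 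This establishes $\tau_c \in \MCG(\Sigma,q) \iff c$ is admissible. One subtlety to address: $\MCG(\Sigma,q)$ is defined via $q \circ \phi = q$ on $\scc(\Sigma)$, whereas the computation above is on $H_1(\Sigma,\Z/2\Z)$; but $q$ on curves factors through $[\cdot] : \scc(\Sigma) \to H_1(\Sigma,\Z/2\Z)$ and the action of $\MCG(\Sigma)$ is compatible with this factorization, so the two conditions agree --- I would state this compatibility explicitly but it needs no real argument beyond naturality.

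Finally, the ``In particular'' clause is immediate: $\tq$ is by definition generated by admissible twists, each of which lies in the subgroup $\MCG(\Sigma,q)$ by the first part, so the subgroup they generate is contained in $\MCG(\Sigma,q)$. I do not anticipate a genuine obstacle here; the only thing requiring a moment's care is the handling of the non-separating hypothesis --- one must check that it is exactly what guarantees the implication ``$\tau_c \in \MCG(\Sigma,q) \Rightarrow q(c)=1$'' (for a separating curve $[c]=0$ and $\tau_c$ acts trivially on homology, so it automatically preserves $q$ regardless of the --- here meaningless --- value $q(c)=0$, which is why the definition of admissible builds in non-separation). Keeping that bookkeeping straight is the whole content of the proof.
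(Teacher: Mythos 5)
Your proof is correct and follows essentially the same route as the paper: the action of $\tau_c$ on $H_1(\Sigma,\Z/2\Z)$ via $a \mapsto a + \ipd{a}{[c]}[c]$, followed by the defining identity of the quadratic form, yielding the correction term $\ipd{a}{[c]}\bigl(q(c)+1\bigr)$. Your extra care about the non-separating hypothesis (needed to find $a$ with $\ipd{a}{[c]}=1$ for the ``only if'' direction) is a point the paper glosses over, but the argument is the same.
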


\begin{proof}
Let $\tau_c$ be the Dehn twist along a simple closed curve $c\subset \Sigma$. For any $d \in \scc(\Sigma)$, we have the following equality 
\[
\left[ \tau_c(d) \right]_2 = \left[d \right]_2 + \ipd{c}{d} \left[c \right]_2
\]
in $H_1(\Sigma,\Z/2\Z)$. By definition of a quadratic form on $H_1(\Sigma,\Z/2\Z)$, it follows from the above equality that
$$
q(\tau_c (d)) =   q(d) + \ipd{c}{d} \cdot q(c) +\ipd{c}{d}
$$
and hence that $q(\tau_c(d)) = q(d)$ for all $d\in\scc(\Sigma)$ if and only if $q(c) =1$.
\end{proof}

Now, we are in position to state the main statement of this section.

\begin{Theorem}\label{thm:spin}
For $g \geq 5$, we have $\tq = \MCG(\Sigma,q)$.
\end{Theorem}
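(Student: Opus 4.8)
The goal is to show that the subgroup $\tq$ generated by admissible twists exhausts the spin mapping class group $\MCG(\Sigma,q)$ when $g\geq 5$. By Lemma \ref{lem:adm} we already have the inclusion $\tq < \MCG(\Sigma,q)$, so only the reverse inclusion needs work. The natural strategy is to exhibit an explicit, well-understood generating set for $\MCG(\Sigma,q)$ and then realize each generator as a product of admissible twists. The cleanest route is via Humphries-type generating sets: the mapping class group $\MCG(\Sigma)$ is generated by Dehn twists along a chain of $2g+1$ (or the Humphries $2g+1$) simple closed curves, and for the spin mapping class group there is an analogous result—due to work building on Johnson and the theory of the Arf invariant—stating that $\MCG(\Sigma,q)$ is generated by Dehn twists along simple closed curves $c$ with $q(c)=1$ arranged in a suitable configuration (for $q$ of the appropriate Arf invariant). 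Concretely, I would fix a standard model of $(\Sigma,q)$ for each Arf invariant, pick a chain/tree of admissible curves whose twists are known to generate $\MCG(\Sigma,q)$, and observe that these twists are by construction admissible, hence lie in $\tq$.

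\textbf{Key steps, in order.}
First, reduce to a normal form: since any two spin structures of the same Arf invariant on a genus-$g$ surface are related by a diffeomorphism, and since $\tq$ and $\MCG(\Sigma,q)$ transform compatibly under conjugation, it suffices to treat one representative $q$ per Arf invariant. Second, recall (or cite) the explicit generation result for the spin/hyperelliptic-type mapping class group: for $g\geq 5$ the group $\MCG(\Sigma,q)$ is generated by a finite collection of Dehn twists along admissible curves; the relevant statement is essentially contained in the literature on generators of stabilizers of quadratic forms (one can invoke the description of $\MCG(\Sigma,q)$ via its action and a finite presentation, or the chain-relation approach). Third, verify that every curve in the chosen generating configuration satisfies $q(c)=1$ and is non-separating—this is a direct computation in the standard model, using the defining relation $q(a+b)=q(a)+q(b)+\ipd{a}{b}$ to evaluate $q$ on the chain curves. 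Fourth, conclude: each such generator is an admissible twist, so it lies in $\tq$; therefore $\MCG(\Sigma,q)<\tq$, and combined with Lemma \ref{lem:adm} we get equality.

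\textbf{The main obstacle.}
The delicate point is the input fact that $\MCG(\Sigma,q)$ is generated by Dehn twists along admissible curves in the first place—a priori the stabilizer of a quadratic form could fail to be generated by the "obvious" twists, and this is exactly why the hypothesis $g\geq 5$ appears (small-genus exceptional behavior). I expect the proof to route through a known generation statement for the stabilizer of $q$ inside $\MCG(\Sigma)$; the work is then to choose the generating Dehn twists so that all the twisting curves are admissible, which requires care because a Dehn twist $\tau_c$ lies in $\MCG(\Sigma,q)$ if and only if $q(c)=1$, so one cannot use arbitrary curves. A secondary subtlety is handling both Arf invariants uniformly, and ensuring the chosen curves are non-separating (the definition of admissible requires this), which may force a slightly nonstandard configuration of curves in the even-Arf case. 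Once a configuration of admissible generators is pinned down, the remaining verification is routine bookkeeping with the quadratic form.
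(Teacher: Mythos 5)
Your proposal has a genuine gap: the ``key step'' you invoke --- that $\MCG(\Sigma,q)$ is generated by Dehn twists along admissible curves --- \emph{is} the statement of Theorem \ref{thm:spin}, not an input to it. You write that the relevant generation result ``is essentially contained in the literature,'' but the paper is explicit in its introduction that this is precisely the missing piece not covered by \cite{Salter2} or earlier work, and that Theorem \ref{thm:spin} exists to supply it. Your ``main obstacle'' paragraph correctly identifies that the stabilizer of a quadratic form could a priori fail to be generated by the obvious twists, but the proposal then does not overcome that obstacle; it defers it to an unnamed citation. Reducing to a normal form via the Arf invariant and checking that curves in some standard configuration are admissible is the easy bookkeeping; the substance of the theorem is entirely in the generation statement you assume.

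For contrast, the paper's actual argument is a two-step d\'evissage along the short exact sequence $1\rightarrow \T \rightarrow \MCG(\Sigma,q)\rightarrow \spaut(H_1(\Sigma,\Z),q)\rightarrow 1$. On the symplectic side it splits further through reduction mod $2$: Chevalley's theorem gives that $\spaut(q,\Z/2\Z)$ is generated by transvections along classes in $q^{-1}(1)$ (these lift to admissible twists), and Johnson's result that the level-$2$ congruence subgroup is generated by squares of transvections is combined with the genus-$2$ star relation to show that $\tau_d^2\in\tq$ for \emph{every} non-separating $d$, including those with $q(d)=0$ (Lemma \ref{lem:square}). On the Torelli side, Johnson's generation by genus-$1$ bounding pair maps is combined with a rearranged chain relation to express each bounding pair map in terms of admissible twists and squares of twists (Proposition \ref{prop:torelli}). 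None of this machinery --- the star relation, the chain relation, the Chevalley and Johnson generation theorems --- appears in your proposal, and without some substitute for it the proof does not go through.
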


\begin{proof}
The mapping class group $\MCG(\Sigma)$ acts on $H_1(\Sigma,\Z)$ by automorphisms preserving the intersection form. As a consequence, it induces a morphism
\begin{equation}\label{eq:natmorph}
\MCG(\Sigma) \rightarrow \spaut(H_1(\Sigma,\Z))
\end{equation}
whose kernel is referred to as the Torelli group $\T$ of $\Sigma$, see $e.g$ \cite[Section 6.5]{farbmarg}. Restricting this morphism to $\MCG(\Sigma,q)$, we obtain a short exact sequence

\begin{equation}\label{eq:exactspin}
1\rightarrow \T \rightarrow \MCG(\Sigma,q)\rightarrow \spaut(H_1(\Sigma,\Z),q)\rightarrow 1,
\end{equation}
where $\spaut(H_1(\Sigma,\Z),q)$ is the subgroup of $\spaut(H_1(\Sigma,\Z))$ preserving $q$.
In order to prove Theorem \ref{thm:spin}, it is enough to show that $\spaut(H_1(\Sigma,\Z),q)$ is a subgroup of $[\tq]$ and that $\T$ is a subgroup of $\tq$. We prove the first statement in Proposition \ref{prop:spqtq} in Section \ref{sec:spinsymp} and the second one in Proposition \ref{prop:torelli} in  Section  \ref{sec:torelli}.
\end{proof}

\subsection{A short reminder on spin structures}\label{sec:spin}

Consider any collection $a_1,b_1,a_2,b_2,...,a_g,b_g \in \scc(\Sigma)$ forming a geometric symplectic basis of $\Sigma$, see \cite[Section 6.1.2]{farbmarg}. For any index $i$, the quadratic form $q$ behaves in two different manners on the pair $\{a_i,b_i \}$: 
\[ \text{either } q(a_i)  \cdot q(b_i)=1 \text{ or } q(a_i) \cdot q(b_i)=0. \]  
In the first case, we have necessarily that $q(a_i) = q(b_i)=1$. There are several possibilities in the second case. Nevertheless, we can always assume that $\big( q(a_i),q(b_i)\big)=(0,1)$ up to exchanging $a_i$ with $b_i$ and replacing $b_i$ by $\tau_{a_i}(b_i)$.

\begin{Definition}\label{def:qsymp}
The collection $a_1,b_1,a_2,b_2,...,a_g,b_g$ is a\textbf{ $q$-symplectic basis} if for any index $i$, we have either $\big( q(a_i),q(b_i)\big)=(1,1)$ or $\big( q(a_i),q(b_i)\big)=(0,1)$. For a given $q$-symplectic basis, we say that the index $i$ is of \textbf{type} $0$ (respectively $1$) if  $q(a_i)=0$ (respectively $q(a_i)=1$).
\end{Definition}

As we discussed above, $q$-symplectic bases always exist. Note also that a pair of indices $(i,j)$ of type $0$ can be easily turned into a pair of indices of type $1$ and vice versa. Indeed, it suffices to replace $a_i$ and $a_j$ respectively by $\tau_c(a_i)$ and $\tau_c(a_j)$ where $c$ is depicted in Figure \ref{fig:spin} and $q(c)=0$ by construction.

\begin{figure}[h]
\centering
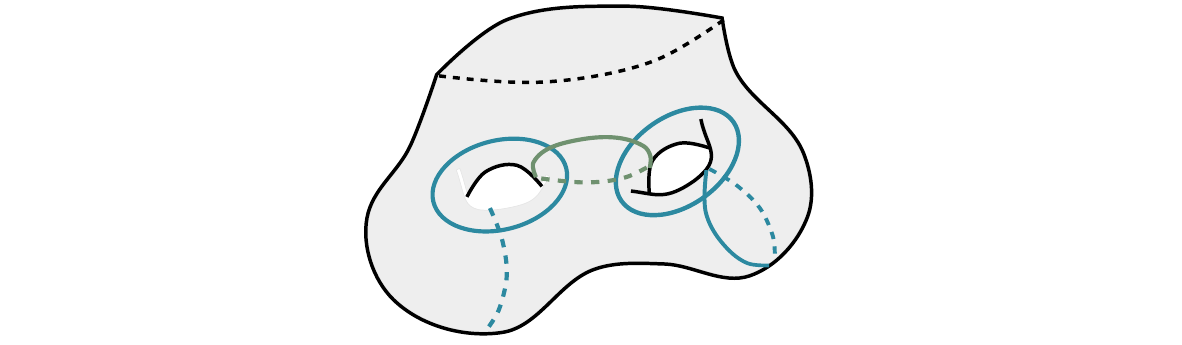
\caption{Changing the type of the indices $i$ and $j$ by twisting along $c$.}
\label{fig:spin}
\end{figure}

The \textbf{Arf invariant} $Arf(q) \in \Z/2\Z$ of $q$ can be defined as follows
\[Arf(q):= \# \,  \{ i \; \vert \; q(a_i)  \cdot q(b_i)=1  \} \, \, mod \, 2. \]
The following theorem due to C. Arf classifies all quadratic forms on $\Sigma$, see \cite{Arf} or \cite[Section 11.4]{scorpan} for a modern treatment.

\begin{theorem}\label{thm:arf}
For two quadratic forms $q, \, q' :  \scc(\Sigma) \rightarrow \Z/2\Z$, there exists $\phi \in \MCG(\Sigma)$ such that $q'=q\circ \phi$ if and only if $\Arf(q)=\Arf(q')$.
\end{theorem}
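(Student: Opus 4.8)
The plan is to reduce the statement to the normalization of $q$-symplectic bases recalled just above, relying on two standard facts: (i) a quadratic refinement of the fixed mod-$2$ pairing $\ipd{\cdot}{\cdot}$ is determined by its values on any geometric symplectic basis — for $x=\sum_{i\in I}a_i+\sum_{j\in J}b_j$ this follows by induction from $q(u+v)=q(u)+q(v)+\ipd{u}{v}$; and (ii) the change-of-coordinates principle, namely that $\MCG(\Sigma)$ acts transitively on the geometric symplectic bases of $\Sigma$ (see \cite[\S 6.1]{farbmarg}).

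For necessity, given $q'=q\circ\phi$ with $\phi\in\MCG(\Sigma)$, I would pick a geometric symplectic basis $a_1,b_1,\dots,a_g,b_g$, compute $\Arf(q')$ from it, and observe that $\phi(a_1),\phi(b_1),\dots,\phi(a_g),\phi(b_g)$ is again a geometric symplectic basis, so it computes $\Arf(q)$; since $\#\{i\mid q'(a_i)q'(b_i)=1\}=\#\{i\mid q(\phi a_i)q(\phi b_i)=1\}$, we get $\Arf(q')=\Arf(q)$. This uses that $\Arf(q)$ does not depend on the chosen basis, which is itself part of Arf's classification and can also be seen from the basis-free identity $(-1)^{\Arf(q)}=2^{-g}\sum_{x\in H_1(\Sigma,\Z/2\Z)}(-1)^{q(x)}$, verified by a short computation multiplicative over the handles.

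For sufficiency, write $\varepsilon:=\Arf(q)=\Arf(q')$. I would choose a $q$-symplectic basis for $q$ and a $q'$-symplectic basis for $q'$ (these exist by the discussion before Definition \ref{def:qsymp}), and then repeatedly apply the elementary move of Figure \ref{fig:spin} — which converts a pair of type-$0$ indices into a pair of type-$1$ indices and conversely — together with permutations of the handles, all realized by mapping classes acting on the respective bases and all preserving $\Arf$. This lets me decrease the number of type-$1$ indices by $2$ whenever at least two are present, so I can bring both forms to the same normal pattern dictated by $\varepsilon$: all indices of type $0$ if $\varepsilon=0$, and exactly one (placed last) of type $1$ if $\varepsilon=1$ (for $g\leq1$ this is immediate from the one-handle moves recalled before Definition \ref{def:qsymp}). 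Concretely I would obtain geometric symplectic bases $\{c_i,d_i\}_i$ and $\{c_i',d_i'\}_i$ on which $q$ and $q'$ take identical values index by index; by (ii) there is $\rho\in\MCG(\Sigma)$ with $\rho(c_i)=c_i'$ and $\rho(d_i)=d_i'$ for all $i$, so $q\circ\rho^{-1}$ and $q'$ agree on $\{c_i',d_i'\}_i$ and hence coincide by (i), giving $q'=q\circ\rho^{-1}$.

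The main obstacle, and the only place genuine care is needed, is the bookkeeping in the normalization step: one must verify that the move of Figure \ref{fig:spin} and the handle permutations, applied to a given $q$-symplectic basis, actually produce a geometric symplectic basis carrying the claimed normal pattern. This is routine once one uses that Dehn twists are symplectomorphisms together with the formula $q(\tau_c(a))=q(a)+\ipd{c}{a}\,(q(c)+1)$ from the proof of Lemma \ref{lem:adm}, but tracking the effect of each twist on all the $a_i$ simultaneously requires attention.
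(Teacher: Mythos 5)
Your proof is correct. Note first that the paper itself gives no proof of this theorem: it is quoted as a classical result of Arf, with references to \cite{Arf} and \cite[Section 11.4]{scorpan}, so there is nothing internal to compare against; your argument is essentially the standard one found in those references, assembled from the ingredients the paper sets up in Section 4.1 (existence of $q$-symplectic bases and the elementary move of Figure \ref{fig:spin}). Both directions check out: for necessity, the basis-independence of $\Arf$ via the Gauss sum $2^{-g}\sum_x(-1)^{q(x)}=(-1)^{\Arf(q)}$ is a correct and clean justification (the sum factors over handles because distinct handles pair to zero). For sufficiency, the bookkeeping you flag as the main obstacle is in fact painless, for a reason worth making explicit: the move of Figure \ref{fig:spin} is nothing but the image of the \emph{entire} geometric symplectic basis under the single mapping class $\tau_c$, since $c$ is disjoint from every basis curve except $a_i$ and $a_j$; hence the output is automatically a geometric symplectic basis, and the only computation needed is $q(\tau_c(a_k))=q(a_k)+\ipd{c}{a_k}\bigl(q(c)+1\bigr)=q(a_k)+1$ for $k\in\{i,j\}$, with $q(c)=0$ holding both when the pair $(i,j)$ is of type $0$ and when it is of type $1$ (in either case $[c]=[b_i]+[b_j]$ gives $q(c)=q(b_i)+q(b_j)=0$, as $q(b_i)=q(b_j)$). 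With that observation, the reduction to the normal pattern, the change-of-coordinates principle, and the determination of a quadratic refinement by its values on a symplectic basis complete the argument exactly as you describe. An equivalent, slightly more algebraic packaging would be to quote Arf's classification of quadratic forms over $\Z/2\Z$ together with the surjectivity of $\MCG(\Sigma)\rightarrow\spaut(H_1(\Sigma,\Z/2\Z))$, but your geometric version is self-contained and matches the toolkit the paper has already introduced.
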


\subsection{The spin symplectic group}\label{sec:spinsymp}

The aim of this section is to show the following.

\begin{Proposition}\label{prop:spqtq}
Assume that $g \geq 5$. Then we have $\spaut(H_1(\Sigma,\Z),q)<[\tq]$.
\end{Proposition}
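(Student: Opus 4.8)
The plan is to realize $\spaut(H_1(\Sigma,\Z),q)$ as (the image in $\spaut$ of) a group generated by admissible Dehn twists, by exhibiting a generating set of the integral spin symplectic group consisting of transvections along $q$-even... no — along admissible classes. Recall that for a primitive $c\in H_1(\Sigma,\Z)$, the symplectic transvection $T_c : x\mapsto x + \ipd{x}{c}c$ (here the sign/parametrization is the one induced by the Dehn twist $\tau_c$) preserves $q$ precisely when $q(c)=1$, by exactly the computation in the proof of Lemma \ref{lem:adm}. So $[\tq]$ contains all transvections along admissible classes, and it suffices to show these generate $\spaut(H_1(\Sigma,\Z),q)$.

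First I would fix a $q$-symplectic basis $a_1,b_1,\dots,a_g,b_g$ as in Definition \ref{def:qsymp}; since $g\geq 5$ and the Arf invariant is the only invariant (Theorem \ref{thm:arf}), I can arrange the number of type-$1$ indices to be as convenient as the parity of $\Arf(q)$ allows — in particular at least, say, two type-$1$ indices and at least two type-$0$ indices (using the type-switching move of Figure \ref{fig:spin}, which is itself realized by an admissible twist). The key step is then a Reidemeister–Schreier / generation-by-transvections argument: I would invoke (or reprove in this setting) the fact that the integral spin symplectic group $\spaut(H_1(\Sigma,\Z),q)$ is generated by transvections $T_c$ with $q(c)=1$. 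This is classical in spirit — it is the "orthogonal/symplectic group is generated by reflections/transvections" phenomenon adapted to the quadratic refinement, and over $\Z$ for $g$ large it follows from the analysis of $\mathrm{Sp}_{2g}(\Z)$ generated by transvections together with control of the congruence condition mod $2$ encoded by $q$. Concretely I would: (i) show the subgroup $G<\spaut(H_1,\Z)$ generated by admissible transvections acts transitively on the set of admissible primitive vectors (an orbit computation using that one can move between any two admissible vectors by a bounded sequence of admissible transvections — this is where $g\geq 5$ buys enough room); (ii) show the stabilizer in $\spaut(H_1,\Z,q)$ of a standard pair $(a_1,b_1)$ of a type-$1$ index is generated by admissible transvections supported on the complementary genus-$(g-1)$ summand, by the usual "peel off a hyperbolic plane" induction; (iii) conclude by induction on $g$, with the base case handled by a direct check (or by a black-boxed classical result) once $g$ is at least the threshold $5$.

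The main obstacle I expect is step (i)–(ii): controlling the orbit of admissible vectors and the structure of stabilizers purely with admissible (i.e. $q$-value-$1$) transvections, rather than all transvections. The difficulty is that the naive reduction moves (Euclidean-algorithm-style on the coordinates of a vector, or Gaussian elimination on the symplectic form) use transvections along arbitrary primitive classes, including $q$-even ones, and one must systematically replace each such move by a product of admissible ones — which is possible because a transvection along a $q$-even class can be written using admissible classes exactly when there is spare genus, and $g\geq 5$ is the bound that makes every required replacement available. I would isolate this as a lemma: "for $g\geq 5$, every symplectic transvection lies in the group generated by admissible transvections modulo the subgroup preserving $q$" — or more usefully, "admissible transvections act transitively on admissible primitive vectors and on pairs forming a type-$1$ summand" — and then the generation statement for $\spaut(H_1,\Z,q)$ follows by the standard inductive peeling argument. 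Everything else (the transvection-preserves-$q$ computation, the bijection with quadratic forms, the Arf classification) is already available in the excerpt.
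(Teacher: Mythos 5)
There is a genuine gap: your proof defers its entire substance to an unproven ``key step.'' You correctly reduce the proposition to showing that $\spaut(H_1(\Sigma,\Z),q)$ is generated by transvections along primitive classes $c$ with $q(c)=1$, but that statement \emph{is} the proposition (every such class is represented by an admissible simple closed curve, so $[\tq]$ is exactly the group generated by these transvections). Your steps (i)--(iii) --- transitivity of admissible transvections on admissible primitive vectors, generation of stabilizers, induction on $g$ --- are precisely where all the difficulty lives, and you acknowledge as much without supplying the arguments. In particular, the integral statement is strictly harder than its mod~$2$ counterpart: the obstruction is concentrated in the level-$2$ congruence subgroup $\spaut^{(2)}$, and nothing in your sketch explains how to write, say, the square of a transvection along a class with $q=0$ as a product of admissible transvections. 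The assertion that ``a transvection along a $q$-even class can be written using admissible classes exactly when there is spare genus'' is exactly the kind of claim that needs a concrete relation, not an appeal to room; this is also where a naive induction would break at its base case, so the threshold $g\geq 5$ cannot simply be invoked.

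For comparison, the paper avoids the orbit/stabilizer induction altogether. It uses the exact sequence $1\rightarrow\spaut^{(2)}\rightarrow\spaut(q)\rightarrow\spaut(q,\Z/2\Z)\rightarrow 1$, quotes Chevalley for the fact that $\spaut(q,\Z/2\Z)$ is generated by transvections along $q^{-1}(1)$ (the mod~$2$ analogue of your claim, which \emph{is} classical), and quotes Johnson for the fact that $\spaut^{(2)}$ is generated by squares of transvections. The remaining content --- the piece missing from your proposal --- is Lemma \ref{lem:square}: for any non-separating $d$, the square $\tau_d^2$ lies in $\tq$, proved via the genus-$2$ star relation after arranging (using the Arf classification and the type-switching move, as you do) that all the auxiliary curves in the star configuration are admissible. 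If you want to salvage your route, you would need either a citable integral generation theorem for the spin symplectic group by anisotropic transvections, or an explicit relation playing the role of the star relation; as written, the proposal is a plan rather than a proof.
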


\begin{proof}
For the sake of simplicity, let us denote by $\spaut(q):=\spaut(H_1(\Sigma,\Z),q)$ and by $\spaut(q,\Z/2\Z)$ the group of automorphisms of $H_1(\Sigma,\Z/2\Z)$ which preserve both the intersection product and the form $q$. We have the short exact sequence of groups
\[
1\rightarrow \spaut^{(2)}\rightarrow \spaut(q)\rightarrow \spaut(q,\Z/2\Z)\rightarrow 1
\]
where $\spaut^{(2)}$ is the subgroup of $\spaut(q)$ consisting of elements whose matrix in any symplectic basis is the identity modulo $2$. Hence, it suffices to show that $\spaut^{(2)} < [\tq]$ and $\spaut(q,\Z/2\Z) < [\tq]_2$.

By  \cite[I.5.1]{chevalley}, the group $\spaut(q,\Z/2\Z)$ is generated by the transvections along the elements of the set $q^{-1}(1)\subset H_1(\Sigma,\Z/2\Z)$ for $g\geq 3$. By definition, any such transvection is the reduction $mod \: 2$ of an admissible twist. It follows that $\spaut(q,\Z/2\Z) < [\tq]_2$.

To show that $\spaut^{(2)} < [\tq]$, recall that $\spaut^{(2)}$ is generated by squares of transvections (see\cite[Lemma 5]{John85}). The result follows from the lemma below.
\end{proof}

\begin{Lemma}\label{lem:square}
Assume that $g\geq5$. For any non-separating $d \in \scc(\Sigma)$, we have $\tau_d^2 \in \tq$.
\end{Lemma}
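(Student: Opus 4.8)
The plan is to split according to the value of $q(d)$. If $q(d)=1$ then $d$ is admissible, so $\tau_d$ is itself an admissible twist and $\tau_d^2\in\tq$ trivially. The whole content is the case $q(d)=0$, which I would treat as follows.

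First I would realise $d$ as one of the two boundary components of an embedded subsurface $W\cong\Sigma_{1,2}$ (genus one, two boundary components) carrying an \emph{admissible handle}, i.e. containing simple closed curves $a,b$ with $i(a,b)=1$ and $q(a)=q(b)=1$. Such a $W$ exists because, cutting $\Sigma$ along $d$, one is left with a surface of genus $g-1\geq 4$; a quadratic form on a surface of genus $\geq 2$ always contains an admissible pair of geometric intersection one (if its Arf invariant is $1$ it splits off a $\langle 1,1\rangle$ summand, and if it is $0$ one may take $a=a_1+b_1$, $b=a_1+a_2+b_2$ in two standard handles), and one then tubes a regular neighbourhood of $a\cup b$ to $d$ along an arc. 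The resulting $W$ has $d$ as one boundary component and a second boundary component $e$ with $[e]=[d]$ in $H_1(\Sigma,\Z)$ and $q(e)=q(d)=0$. Inside $W$ there is a $3$-chain $c_1=a$, $c_2=b$, $c_3$ whose regular neighbourhood is all of $W$; here $[c_3]_2=[a]_2+[d]_2$, so $q(c_3)=q(a)+q(d)+\ipd{a}{d}=1$, hence $c_1,c_2,c_3$ are all admissible. The chain relation (cf. \cite[\S 4.4.2]{farbmarg}) then gives $(\tau_{c_1}\tau_{c_2}\tau_{c_3})^4=\tau_d\tau_e$, so $\tau_d\tau_e\in\tq$.

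The second step is the identity
\[
\tau_d^2=(\tau_d\tau_e)\,(\tau_d\tau_e^{-1}),
\]
which is valid because $d$ and $e$ are disjoint, hence $\tau_d$ and $\tau_e$ commute. Since $\tau_d\tau_e\in\tq$, it remains to show that the genus-one bounding pair map $\tau_d\tau_e^{-1}$ lies in $\tq$. I would obtain this from the lantern relation: cutting $W$ along its genus curve $a$ produces a four-holed sphere with boundary $d$, $e$ and two copies of $a$, and the lantern relation there, after regluing the two copies of $a$, reads $\tau_d\tau_e\tau_a^2=\tau_z\tau_u\tau_v$, where $z$ bounds the $\Sigma_{1,1}$ spanned by $\{a,b\}$ (so $\tau_z=(\tau_a\tau_b)^6$ is a product of admissible twists) and $[u]_2=[v]_2=[d]_2+[a]_2$, whence $q(u)=q(v)=q(d)+q(a)+\ipd{d}{a}=1$ and $u,v$ are admissible. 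Playing this off against the analogous relation built from a second admissible handle disjoint from $W$ (available since removing $W$ leaves genus $\geq 3$), one expresses $\tau_d\tau_e^{-1}$ as a product of admissible twists together with twists about genus-one separating curves spanning admissible handles, the latter being again products of admissible twists via $(\tau_\alpha\tau_\beta)^6=\tau_{\partial}$. Assembling these relations yields $\tau_d^2\in\tq$.

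The step I expect to be the main obstacle is this last one: producing the lantern configuration(s) that express the bounding pair map $\tau_d\tau_e^{-1}$ \emph{entirely} in terms of admissible twists, and in particular arranging every separating curve that occurs to bound a subsurface carrying an admissible handle (a naive choice leaves behind a separating twist around a handle containing $d$, which has no admissible handle decomposition since $q(d)=0$). This is precisely where the hypothesis $g\geq 5$ enters, guaranteeing enough room in $\Sigma$ for the auxiliary handles and subsurfaces; the statement genuinely fails in low genus.
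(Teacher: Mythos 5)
Your reduction is sound up to the bounding-pair step: the case $q(d)=1$ is trivial, the subsurface $W$ with an all-admissible $3$-chain exists (the count $q(c_3)=q(a)+q(d)+\ipd{a}{d}=1+0+0=1$ is correct), the chain relation gives $\tau_d\tau_e\in\tq$, and the factorization $\tau_d^2=(\tau_d\tau_e)(\tau_d\tau_e^{-1})$ is valid since $d$ and $e$ are disjoint. The genuine gap is exactly the step you flag yourself: you never establish that the genus-one bounding-pair map $\tau_d\tau_e^{-1}$ lies in $\tq$. The lantern you write down, $\tau_d\tau_e\tau_a^2=\tau_z\tau_u\tau_v$, has $d$ and $e$ both among the boundary components of the four-holed sphere, i.e.\ on the same side of the relation, so after moving $\tau_a^2$ across it only re-derives $\tau_d\tau_e\in\tq$ and gives no access to $\tau_e^{-1}$. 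The phrase ``playing this off against the analogous relation built from a second admissible handle'' is not yet an argument, and the obstruction is not merely bookkeeping: parity constraints imposed by $q$ block the natural candidates. For instance, a $(2h+1)$-chain of admissible curves bounds components $\partial$ with $q(\partial)=\sum_{i\text{ odd}}q(c_i)=h+1 \bmod 2$, so no all-admissible $5$-chain can span a genus-$2$ subsurface whose boundary is in the class of $d$ when $q(d)=0$; this kills the most natural way of producing a second relation to cancel $\tau_e$ against. Worse, the statement you are reducing to is precisely Proposition \ref{prop:torelli} of the paper, whose proof \emph{uses} Lemma \ref{lem:square} twice (to place $\tau_b^2$ and $\tau_\beta^2$ in $\tq$); so your reduction runs in a circle unless you supply an independent proof of the bounding-pair claim, which you do not.

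For comparison, the paper avoids bounding pairs entirely and uses the genus-$2$ star relation $\tau_d^2=(\tau_{a_1}\tau_{a_1'}\tau_{c_1}\cdots\tau_{c_4})^5(\tau_{a_3}\tau_{a_3'})^{-1}$: using $g\geq 5$ and the Arf classification, one chooses a $q$-symplectic basis forcing the handles adjacent to $d=a_4$ to have the right types, realizes the star configuration by change of coordinates, and checks that all nine curves appearing on the right-hand side are admissible. If you want to keep your architecture, you must first prove directly that $\tau_d\tau_e^{-1}\in\tq$ for a genus-one bounding pair whose genus-one side carries an admissible handle; some relation producing $d$ and $e$ on \emph{opposite} sides (such as the star relation itself) appears to be needed for that, at which point the detour through $W$ becomes superfluous.
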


In order to prove the above lemma, we will use the genus-$2$ star relation as given in \cite[Proposition 4.5]{Salter1}. Considering the simple closed curves of Figure \ref{fig:starrel}, the genus-$2$ star relation can be stated as follows
\[ \tau_d^2= (\tau_{a_1}\tau_{a_1'}\tau_{c_1} ...\tau_{c_4})^5(\tau_{a_3}\tau_{a_3'})^{-1}. \]
\begin{figure}[h]
\centering
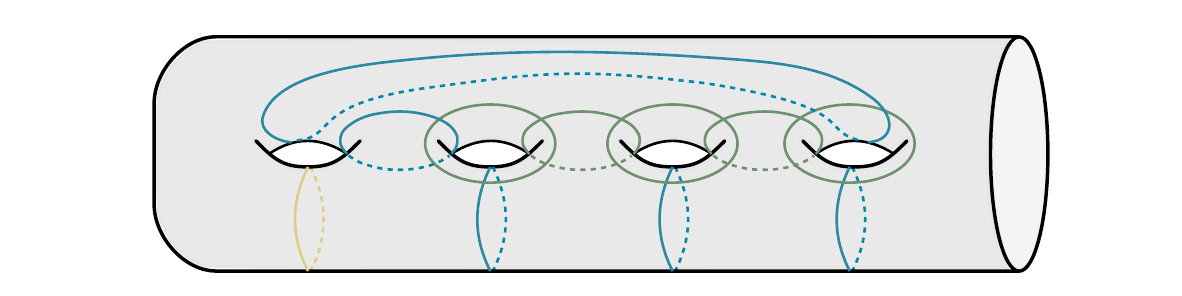
\caption{The curves involved in the genus-$2$ star relation.}
\label{fig:starrel}
\end{figure}

\begin{proof}[Proof of Lemma \ref{lem:square}]
If $q(d)=1$, then $\tau_d$ is admissible and there is nothing to prove. Let us assume that $q(d)=0$.
We claim that there exist admissible curves $a_1,a_1',c_1,...c_4,a_3,a_3' \in \scc(\Sigma)$ arranged as in Figure \ref{fig:starrel}. Applying the genus-$2$ star relation, we conclude that $\tau_d^2 \in \tq$.

Let us prove the claim. As the surface $\Sigma$ has genus $g\geq 5$, Theorem \ref{thm:arf} implies that we can always find a $q$-symplectic basis $a_1,b_1,...,a_g,b_g$ such that 
\[ q(a_1)  \cdot q(b_1)=q(a_3)  \cdot q(b_3)=1 \text{ and } q(a_2)  \cdot q(b_2)=q(a_4) \cdot q(b_4)=0\]
with $a_4=d$. Indeed,  we can always correct the Arf invariant on the rest of the $q$-symplectic basis, see Section \ref{sec:spin}. In particular, we have $q(b_2)=1$ according to Definition \ref{def:qsymp}.

Now, define $c_1:=b_1$, $c_3:=b_2$ and $c_5:=b_3$. It follows from the construction that  the curves $a_1$, $a_3$, $c_1$, $c_3$ and $c_5$ are admissible. By the change-of-coordinate principle \cite[Section 1.3]{farbmarg}, it is always possible to complete the latter collection of curves with curves  $a_1'$, $c_2$, $c_4$ and $a_3'$ in order to achieve the configuration of Figure \ref{fig:starrel}. It remains to show that $a_1'$, $c_2$, $c_4$ and $a_3'$ are also admissible. This follows from the computations
\[
\begin{array}{rl}
q(a_1') & =q(a_1)+q(d)=1+0=1,\\
q(c_2) & =q(a_1)+q(a_2)=1+0=1,\\
q(c_4) & =q(a_2)+q(a_3)=0+1=1,\\
q(a_3') & =q(a_1')+q(c_2)+q(c_4)=1+1+1=1.
\end{array}
\]
\end{proof}

\subsection{The Torelli group}\label{sec:torelli}

Recall that the Torelli group $\T < \MCG(\Sigma)$ is defined as the kernel of the morphism
\eqref{eq:natmorph}. The main result of this section is the following.

\begin{Proposition}\label{prop:torelli}
For $g \geq 5$, we have $\T <\tq$.
\end{Proposition}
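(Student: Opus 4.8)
The plan is to exhibit a generating set for the Torelli group $\T$ each of whose elements lies in $\tq$. The standard generating set, due to Johnson, is the set of \emph{bounding pair maps} $\tau_c \tau_{c'}^{-1}$ where $c, c'$ is a bounding pair (two disjoint, homologous, non-separating simple closed curves), together with \emph{separating twists} $\tau_e$ (which are in fact products of bounding pair maps, so can be dropped); Johnson proved this set generates $\T$ for $g \geq 3$. So it suffices to show that every bounding pair map lies in $\tq$. The key leverage is Lemma \ref{lem:square}: for any non-separating $d$ we already know $\tau_d^2 \in \tq$, and of course $\tau_d \in \tq$ whenever $q(d) = 1$.

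First I would analyze a bounding pair $c, c'$ in terms of the quadratic form $q$. Since $c$ and $c'$ are homologous mod $2$, we have $q(c) = q(c')$, call this common value $\varepsilon \in \Z/2\Z$. If $\varepsilon = 1$, both $\tau_c$ and $\tau_{c'}$ are admissible twists, so $\tau_c \tau_{c'}^{-1} \in \tq$ directly and there is nothing more to do. The substantive case is $\varepsilon = 0$. Here the idea is to insert an auxiliary curve: because $c$ and $c'$ cobound a subsurface $\Sigma'$ on one side, and because $g \geq 5$ leaves plenty of room, I would look for a simple closed curve $a$ disjoint from both $c$ and $c'$, lying in the complementary subsurface, with $q(a) = 1$ and with $c, c'$ each homologous to $a$ — i.e. arrange $a$ so that $\{c, a\}$ and $\{c', a\}$ are both bounding pairs (topologically this just needs $c \cup a$ and $c' \cup a$ to each separate, which is a change-of-coordinates / Euler characteristic count once the subsurfaces are large enough). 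Then write
\[
\tau_c \tau_{c'}^{-1} = (\tau_c \tau_a^{-1})(\tau_a \tau_{c'}^{-1}).
\]
This reduces the problem to bounding pair maps $\tau_c \tau_a^{-1}$ in which one of the two curves, namely $a$, is admissible.

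The final step handles this reduced case, and this is where I expect the main obstacle to lie. For a bounding pair $\{c, a\}$ with $q(a) = 1$ (hence also $q(c) = 0$), one can use the lantern relation or, more efficiently, the commuting-pair trick: if $a, a'$ are two disjoint admissible curves forming a bounding pair with a third curve and one has enough admissible curves to play with, there is an identity expressing $\tau_c \tau_a^{-1}$ as a product of the squares $\tau_d^{\pm 2}$ (covered by Lemma \ref{lem:square}) and honest admissible twists. Concretely, I would exploit that a bounding pair map equals a product of a separating twist and commutators, or use the chain relation / the hyperelliptic-type relations in a genus-$2$ subsurface containing $c$ and $a$, choosing the subsurface so that every chain curve appearing has $q = 1$ except possibly one, whose twist appears squared. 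The delicate bookkeeping is ensuring the $q$-values of \emph{all} the auxiliary curves in whatever relation is used come out to $1$ (or appear only in even powers); $g \geq 5$ is exactly what gives the freedom, via Theorem \ref{thm:arf} and the type-changing move of Figure \ref{fig:spin}, to normalize the $q$-symplectic basis so that the relevant genus-$2$ subsurface has the needed form. Once every generator of $\T$ is written as a word in $\tq$, the proof is complete.
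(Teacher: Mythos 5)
Your starting point (Johnson's bounding-pair generators, splitting into the cases $q=1$ and $q=0$) matches the paper, but your intermediate reduction in the case $q(c)=q(c')=0$ contains a genuine contradiction. You propose to find an auxiliary curve $a$ with $q(a)=1$ such that $\{c,a\}$ and $\{c',a\}$ are both bounding pairs, and then factor $\tau_c\tau_{c'}^{-1}=(\tau_c\tau_a^{-1})(\tau_a\tau_{c'}^{-1})$. But, as you yourself observe at the start of the argument, the two curves of a bounding pair are homologous, hence equal in $H_1(\Sigma,\Z/2\Z)$, hence have the same $q$-value. So any $a$ forming a bounding pair with $c$ satisfies $q(a)=q(c)=0$, and the curve you need cannot exist; the ``reduced case'' of a bounding pair with exactly one admissible member is empty. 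The entire weight of the proof therefore falls on your final paragraph, which is only a sketch.

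That final paragraph does gesture at the correct mechanism, which is what the paper actually does, applied directly to the pair $(\alpha,\beta)$ with $q(\alpha)=q(\beta)=0$ rather than to your (vacuous) reduced case. Concretely: inside the genus-$1$ subsurface $\Sigma'$ cobounded by $\alpha$ and $\beta$, choose a chain $a,b,c$ with $i(a,b)=i(b,c)=1$, $i(a,c)=0$, whose tubular neighbourhood has boundary $\alpha\cup\beta$, arranged (using the freedom in $\Sigma'$) so that $q(a)=1$ and hence $q(c)=q(a)+q(\alpha)=1$. The standard chain relation $(\tau_a\tau_b\tau_c)^4=\tau_\alpha\tau_\beta$ is then rewritten via braid relations as $(\tau_b^2\tau_a\tau_b^2\tau_c)^2=\tau_\alpha\tau_\beta$, so that the possibly non-admissible curve $b$ appears only through $\tau_b^2$, which lies in $\tq$ by Lemma \ref{lem:square}. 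This gives $\tau_\alpha\tau_\beta\in\tq$, and finally $\tau_\alpha\tau_\beta^{-1}=(\tau_\alpha\tau_\beta)\tau_\beta^{-2}\in\tq$, again by Lemma \ref{lem:square}. Your sketch names the chain relation and the idea of making all auxiliary curves admissible except one appearing in even powers, but you neither verify that such a configuration exists nor supply the rearrangement of the relation that isolates the even powers; as written, the proof is incomplete, and the reduction you do spell out in detail is the step that fails.
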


To show this, we will use a set of generators of $\T$ provided by Johnson in \cite[Theorem 2]{John79}. A \textbf{bounding pair} $(\alpha,\beta)$ is a pair of  disjoint, non-separating curves $\alpha, \beta \in \scc(\Sigma)$ that are homologous to each other. The \textbf{genus} of a bounding pair is by definition the genus of the smaller of the two subsurfaces bounded by the pair. For a bounding pair $(\alpha,\beta)$, define the \textbf{bounding pair map}
\[\tau_{(\alpha,\beta)}:= \tau_\alpha \tau_\beta^{-1} \in \MCG(\Sigma). \]
Define the genus of a bounding pair map to be the genus of its underlying bounding pair.

\begin{theorem}[\cite{John79}]\label{thm:johnson}
For $g \geq 3$, the Torelli group $\T$ is generated by the bounding pair maps of genus $1$.
\end{theorem}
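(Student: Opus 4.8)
The plan is to deduce Johnson's statement from Powell's theorem --- that for every $g\ge 1$ the Torelli group $\T$ is generated by Dehn twists about separating curves together with bounding pair maps --- by means of two elementary relation-theoretic reductions, valid once $g\ge 3$. First, for $g\ge 3$ every separating twist is a product of bounding pair maps: applying the lantern relation to a suitably embedded four-holed sphere in $\Sigma$ rewrites the twist about a genus-$1$ separating curve as a product of three bounding pair maps, and the twist about a separating curve of larger genus reduces to the genus-$1$ case by a chain argument; hence $\T=\langle\,\text{bounding pair maps}\,\rangle$ when $g\ge 3$. Second, every bounding pair map is a product of genus-$1$ ones: if $(\alpha,\beta)$ cobound a subsurface of genus $h\ge 2$, choose inside it a non-separating simple closed curve $\gamma$, disjoint from $\alpha\cup\beta$, homologous to $\alpha$, and splitting that subsurface into pieces of genus $1$ and $h-1$; then $\tau_{(\alpha,\beta)}=\tau_{(\alpha,\gamma)}\tau_{(\gamma,\beta)}$ with $(\alpha,\gamma)$ a genus-$1$ bounding pair, and induction on $h$ finishes (here $g\ge 3$ ensures that the genus-$1$ side is the smaller one, so that $\tau_{(\alpha,\gamma)}$ really is a genus-$1$ bounding pair map). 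Combining the two reductions gives $\T=\langle\,\text{genus-}1\text{ bounding pair maps}\,\rangle$ for $g\ge 3$; both are short manipulations of explicit relations in $\MCG(\Sigma)$, so I would dispatch them first.

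The weight of the argument then rests on Powell's theorem. One route is Powell's own: from a finite presentation of $\MCG(\Sigma)$ by Dehn twist generators together with braid, chain and lantern relations \cite{farbmarg}, an element of $\T$ is a word $w$ in the generators whose image in $\spaut(H_1(\Sigma,\Z))$ is trivial, and one normalizes $w$ using the relations into a product of conjugates of separating twists and bounding pair maps, verifying at each step --- through the homology representation --- that the correction terms introduced are again of this form; this is a finite but delicate combinatorial verification. The route I would prefer, to stay conceptual, is to let $\T$ act on a connected complex --- the complex spanned by isotopy classes of non-separating simple closed curves, or the complex of cycles of Bestvina--Bux--Margalit --- and invoke the standard principle that a group acting on a connected complex is generated by the stabilizers of a family of vertices meeting every orbit, together with elements realizing the edges of the orbit graph. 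An induction on $g$ then runs: cutting $\Sigma$ along a fixed non-separating curve and feeding the result into the Birman exact sequences exhibits a vertex stabilizer in $\T$ as an extension of a lower-genus Torelli group (controlled by the inductive hypothesis) by a point- or disk-pushing subgroup, itself generated by point-pushes along simple loops --- each a genus-$1$ bounding pair map --- while the inclusion maps from lower-genus Torelli groups carry bounding pair maps and separating twists to elements of the same type. The base case $g=3$, which admits no further induction, I would settle directly from the known presentation of the genus-$3$ mapping class group.

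The hard part, on either route, is the control of the ``connecting'' data. In the presentation route it is the bookkeeping that every correction term produced while normalizing $w$ actually lies in the subgroup generated by separating twists and bounding pair maps. In the complex route it is both proving that the chosen complex is connected and showing that the elements needed to move between vertices in distinct $\T$-orbits can be chosen inside $\langle\,\text{bounding pair maps}\,\rangle$ --- equivalently, proving directly that any two disjoint homologous non-separating simple closed curves differ by a product of bounding pair maps, which is the substantive lemma underlying \cite{John79}. It is worth recording that the hypothesis $g\ge 3$ is genuinely needed: in genus at most $2$ there is no bounding pair of non-isotopic curves, so there $\T$ is the infinitely generated group of separating twists, which is not generated by bounding pair maps at all --- and this is also why the lantern-relation step above requires $g\ge 3$.
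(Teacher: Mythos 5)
The paper offers no proof of this statement: it is imported as a citation from Johnson \cite{John79}, so there is nothing internal to compare against. Your outline is essentially Johnson's own argument (the one also reproduced in \cite{farbmarg}): start from the Birman--Powell theorem that $\T$ is generated by separating twists and bounding pair maps, eliminate separating twists via the lantern relation, and split a bounding pair map of genus $h\geq 2$ as $\tau_\alpha\tau_\beta^{-1}=(\tau_\alpha\tau_\gamma^{-1})(\tau_\gamma\tau_\beta^{-1})$ and induct; that second reduction is stated correctly. Two caveats. First, your reduction of a separating twist of genus $\geq 2$ ``to the genus-$1$ case by a chain argument'' is vague and unnecessary: for $g\geq 3$ one side of any separating curve $c$ has genus $\geq 2$, so the four-holed sphere of the lantern relation can be embedded with $x_0=c$ and each pair $(x_i,y_i)$ a bounding pair, expressing $\tau_c$ directly as a product of three bounding pair maps; I do not see how the chain relation alone performs your proposed reduction. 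Second, the actual weight of the theorem is Powell's generation result, which you only sketch along two alternative strategies without carrying either out; as written your text is a (correct, standard) reduction of Johnson's theorem to Powell's theorem, and should cite Powell rather than claim a proof of it.
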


Now, the strategy is to show that any bounding pair map of genus $1$ is an element of $\tq$. To that aim, we will need a particular instance of the chain relation (see \cite[Proposition 4.12]{farbmarg}). 

Let $a,b,c \in \scc(\Sigma)$ such that $i(a,b)=i(b,c)=1$ and $i(a,c)=0$ where $i(.,.)$ denotes the algebraic intersection number. Denote by $\alpha$ and  $\beta$ the two boundary components of the tubular neighbourhood of $a \cup b \cup c$ in $\Sigma$. Then the chain relation states that 
\begin{equation}\label{eq:chrel}
(\tau_a \tau_b \tau_c)^4= \tau_\alpha \tau_\beta. 
\end{equation}
In the present case, the element $\tau_a$ commutes with $\tau_c$ and $\tau_b$ commutes with both $\tau_\alpha$ nand $ \tau_\beta$. Using these and the braid relation \cite[Proposition 3.11]{farbmarg}, we derive the following relation
\begin{equation}\label{eq:chrel2}
(\tau_b^2 \tau_a \tau_b^2 \tau_c)^2= \tau_\alpha \tau_\beta. 
\end{equation}
The proof goes as follows
\[
\begin{array}{rl}
\tau_\alpha \tau_\beta & = \tau_b (\tau_\alpha \tau_\beta) \tau_b^{-1} = \tau_b (\tau_a \tau_b \tau_c)^4 \tau_b^{-1}\\
								& = \big( \tau_b (\tau_a \tau_b \tau_c \tau_a \tau_b \tau_c) \tau_b^{-1}\big)^2\\
								&= \big( \tau_b (\tau_a \tau_b \tau_a \tau_c \tau_b \tau_c) \tau_b^{-1}\big)^2\\
								&= \big( \tau_b (\tau_b \tau_a \tau_b \tau_b \tau_c \tau_b) \tau_b^{-1}\big)^2\\
								&=(\tau_b^2 \tau_a \tau_b^2 \tau_c)^2.
\end{array}
\]
We have now all the ingredients we need.
 
\begin{proof}[Proof of Proposition \ref{prop:torelli}]
As announced above, we will prove that any bounding pair map of genus $1$ is an element of $\tq$ and conclude with Theorem \ref{thm:johnson}.

Let $(\alpha,\beta)$ be a bounding pair of genus $1$ and let us show that $\tau_{(\alpha,\beta)}$ is an element of $\tq$. As $\alpha$ and $\beta$ are homologous, we have $q(\alpha)=q(\beta)$. If $q(\alpha)=1$, then both $\alpha$ and $\beta$ are admissible curves and there is nothing to prove.

Let us assume that $q(\alpha)=0$ and denote by $\Sigma' \subset \Sigma$ the subsurface of genus $1$ bounded by $\alpha$ and $\beta$. From Section \ref{sec:spin}, we know that we can always find simple closed curves $a$ and $b$ in $\Sigma'$ such that $i(a,b)=1$ and $q(a)=1$, up to exchanging $a$ with $b$. By the change-of-coordinate principle \cite[Section 1.3]{farbmarg}, we can find a curve $c \subset \Sigma'$ as pictured in Figure \ref{fig:bp}. 

On the one hand, we have that $\tau_a$, $\tau_b^2$ and $\tau_c$ are elements of $\tq$. Indeed, we have that $q(c) = q(a) + q(\alpha)=1$ so that $\tau_c$ is admissible and $\tau_b^2 \in \tq$ by Lemma \ref{lem:square}. 

On the other hand, the curves $a$, $b$, $c$, $\alpha$ and $\beta$ are in the configuration of the chain relation \eqref{eq:chrel}. In particular, we are in position to apply the relation \eqref{eq:chrel2} and deduce that $\tau_\alpha \tau_\beta$ is in $\tq$. Applying Lemma \ref{lem:square} again, we have that $\tau_\beta^2 \in \tq$ and in turn that $\tau_{(\alpha,\beta)} = \tau_\alpha \tau_\beta \tau_\beta^{-2}$ is in $\tq$.
\end{proof}

\begin{figure}[h]
\centering
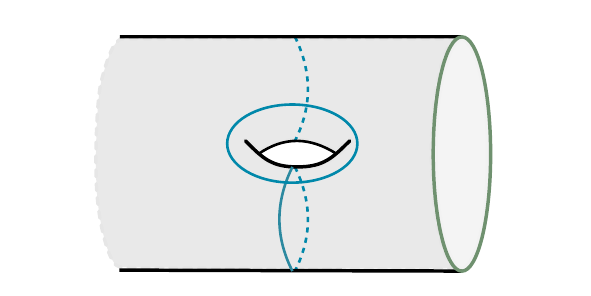
\caption{A bounding pair map of genus $1$.}
\label{fig:bp}
\end{figure}

\section{The spin case}\label{sec:spinst}

In this section, we come back to the discussion on Theorems \ref{thm:ispin} and \ref{thm:spinm}. Thus, we assume that the image of $\Xd\rightarrow \LK^*$ is of dimension~$2$ and that the largest order $n$ of a root $\LS$ of $K_{\Xd}\otimes \Li$ is even. In particular, the polygon $\da$ is even, as we recalled in Section \ref{sec:poly}.

Notice also that since the Picard group of $\Xd$ is free \cite[Proposition p.63]{Ful}, the $n\textsuperscript{th}$ root $\LS$ of $K_{\Xd} \otimes \Li$ is unique.

Let us fix a smooth curve $C_0$ in $\LL$. Since $n$ is even, the restriction of $\LS^{\otimes \frac{n}{2}}$ to $C_0$ is a square root of $K_{C_0}$, $i.e.$ a spin structure on $C_0$ (see \cite[Proposition 2.7]{CL}). In this section, we will denote this spin structure by $S$ and its associated quadratic form by $q$ (see Section \ref{sec:generators}).

Recall the notion of admissible twist and the group $\tq$ generated by them, see Definition \ref{def:adm}. We are now in position to prove Theorem \ref{thm:spinm}.

\begin{proof}[Proof of Theorem \ref{thm:spinm}.]
By Theorem B and Section 10 in \cite{Salter2}, we have that $\tq$ is a subgroup of $\im(\mu)$. Also, the line bundle $\Li$ corresponds to a lattice polygon $\Delta$ such that $\da$ is even, see Section  \ref{sec:poly}. It implies that the number of lattice points of $\da$, that is also the genus $g_\Li$ of $C_0$ is at least $6$. In particular, we can apply all the results of Section \ref{sec:generators}.

If $n=2$, Theorem \ref{thm:spin} implies that $\MCG(C_0,q) < \im(\mu)$ which has to be an equality by \cite[Proposition 2.7]{CL}.

For $n$ even, Proposition \ref{prop:spqtq} implies that $\spaut(H_1(C_0,\Z),q) < \im([\mu])$ which has to be an equality by \cite[Proposition 2.7]{CL}.
\end{proof}

We now conclude this section with the proof of Theorem \ref{thm:ispin}. First, notice that for any simple closed curve $\delta$ on $C_0$, the condition that its tangent framing lifts to the spin structure $S$ is equivalent to the equality $q(\delta) = 1$.

 On the other hand, we know from Theorem \ref{thm:rappel} that any $A$-cycle on $C_0$ is a vanishing cycle. Moreover, it follows from \cite[Proposition 3.23]{Voisin2} that all the other vanishing cycles can be obtained from each other under the action of $\im(\mu) = \MCG(C_0,S)$. Thus, it is enough to show that if $\delta\subset C_0$ is a simple closed curve with $q(\delta) = 1$, then there exists an element $\phi\in\MCG(C_0,S)$ such that $\phi(\delta)$ is an $A$-cycle. This is what is proven in the following lemma.

\begin{Lemma}\label{lem:transitive}
  Let $C$ be a closed oriented genus $g$ surface with $g\geq 1$ and let $q : H_1(C,\Z/2\Z)\rightarrow \Z/2 \Z$ be a quadratic form on $C$. \\
For any two non-separating simple closed curves $a$ and $a'$ on $C$, $q(a) = q(a')$ if and only if there exists $\varphi\in \MCG(C,q)$ such that $\varphi(a)$ is isotopic to $a'$.
\end{Lemma}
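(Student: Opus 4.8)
The plan is to prove the equivalence in both directions, the forward implication being a routine consequence of the fact that elements of $\MCG(C,q)$ preserve $q$ by definition, and $q$ descends to a function on unoriented simple closed curves (as recalled in Section \ref{sec:generators}). So the content is the reverse implication: given non-separating $a, a'$ with $q(a) = q(a')$, produce $\varphi \in \MCG(C,q)$ with $\varphi(a)$ isotopic to $a'$. The natural strategy is to use the change-of-coordinate principle \cite[Section 1.3]{farbmarg} to first find \emph{some} $\psi \in \MCG(C)$ with $\psi(a) = a'$, and then correct $\psi$ by an element of the full mapping class group so that the corrected map preserves $q$ while still sending $a$ to $a'$.

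First I would handle the forward direction and set up coordinates: complete $a$ to a geometric symplectic basis $a = a_1, b_1, a_2, b_2, \ldots, a_g, b_g$ and similarly $a' = a_1', b_1', \ldots, a_g', b_g'$, and pick $\psi \in \MCG(C)$ sending one basis to the other (possible by the change-of-coordinate principle, since both are geometric symplectic bases with a distinguished first curve). Then $q$ and $q' := q \circ \psi^{-1}$ are two quadratic forms with $q(a) = q(a') = q'(a')$ — wait, more carefully: set $q' := q \circ \psi$. Then $\psi$ automatically satisfies $q'(a) = q(\psi(a)) = q(a') = q(a)$, and the issue is that $q'$ may differ from $q$. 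By Theorem \ref{thm:arf}, whether we can further adjust depends on $\Arf$. The key point is that we need a mapping class $\rho$ fixing $a$ (up to isotopy) with $q \circ \rho = q'$; then $\varphi := \psi \circ \rho^{-1}$ works, since $q \circ \varphi = q \circ \rho^{-1}$... let me instead phrase it as: we want $\varphi$ with $\varphi(a) = a'$ and $q \circ \varphi = q$, equivalently $\varphi = \psi \circ \rho$ where $\rho \in \MCG(C, a)$ (stabilizer of the isotopy class of $a$) and $q \circ \psi \circ \rho = q$, i.e. $q' \circ \rho = q$ where $q' = q \circ \psi$. So it suffices to show: if $q_1, q_2$ are quadratic forms with $q_1(a) = q_2(a)$, there is $\rho$ in the stabilizer of $a$ with $q_1 \circ \rho = q_2$.

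The main obstacle — and the heart of the argument — is exactly this last claim about the stabilizer of $a$ acting transitively on quadratic forms with a prescribed value on $a$. The clean way to see it: cut $C$ along $a$ to get a surface $C_a$ of genus $g-1$ with two boundary components; a quadratic form on $C$ with value $\epsilon$ on $a$ restricts to a quadratic form on the cut surface (a choice is involved at the boundary, governed by $\epsilon$), and mapping classes of $C$ fixing $a$ surject (enough of) $\MCG(C_a)$. On the cut surface, Arf invariants of both restrictions agree provided we argue that fixing the value $q(a) = \epsilon$ and the relation $q(a+b) = q(a)+q(b)+\ipd{a}{b}$ pins down the relevant data, and then an Arf-type classification for surfaces with boundary (or simply choosing a symplectic basis $b_1, a_2, b_2, \ldots$ avoiding $a_1 = a$ and correcting the $\Arf$ on the pairs $(a_i,b_i)$ for $i \geq 2$ as in Section \ref{sec:spin}, using that $g - 1 \geq 1$) gives the desired $\rho$. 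Concretely: write $q_1, q_2$ in the symplectic basis $a_1 = a, b_1, \ldots, a_g, b_g$; since $q_1(a_1) = q_2(a_1)$, after the standard normalization we may assume $q_1, q_2$ agree on the pair $(a_1, b_1)$ and possibly differ on pairs $(a_i, b_i)$, $i \geq 2$; but $\Arf(q_1) = \Arf(q_2)$ need \emph{not} hold a priori — however the Dehn twists and the type-changing trick of Figure \ref{fig:spin} used to equalize them can all be chosen supported away from $a_1$, hence in the stabilizer of $a$, so whatever $\Arf$ discrepancy there is on the complement is absorbed. I would present this as: invoke Theorem \ref{thm:arf} relativized to the stabilizer of $a$, the point being that the type-changing curves $c$ of Figure \ref{fig:spin} and the generators used in the proof of Theorem \ref{thm:arf} live in the genus $\geq 1$ piece disjoint from $a$. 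This completes the construction of $\varphi$, and the lemma follows.
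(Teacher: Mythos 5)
Your reduction is the right one, and the first half of your argument is fine: the forward direction is immediate, and by the change-of-coordinate principle it suffices to show that the stabilizer of (the isotopy class of) $a$ in $\MCG(C)$ acts transitively on quadratic forms with a prescribed value on $a$. Note, though, that your worry about the Arf invariants is misplaced: since $q'=q\circ\psi$ for some $\psi\in\MCG(C)$, Theorem \ref{thm:arf} gives $\Arf(q')=\Arf(q)$ automatically --- there is no ``discrepancy to absorb'', and if the Arf invariants disagreed no correction could exist at all.

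The genuine gap is the step you call ``Theorem \ref{thm:arf} relativized to the stabilizer of $a$''. That statement is essentially the whole content of the lemma, and it is not established by observing that the curves of Figure \ref{fig:spin} can be chosen disjoint from $a$. Write $a=a_1$ and complete it to a symplectic basis. If $q_1$ and $q_2$ differ on $b_1$, any correcting twist must be along a curve meeting $b_1$; and when $q_1(a_1)=1$ the twist $\tau_{a_1}$ does not change the value at $b_1$ (since $q(\tau_{a_1}(b_1))=q(b_1)+q(a_1)+1=q(b_1)$), so the ``standard normalization'' you invoke to make the two forms agree on the pair $(a_1,b_1)$ is not available inside the stabilizer without further work: one must twist along a curve homologous to $a_1+a_j$, disjoint from $a_1$ but meeting $b_1$ and $b_j$, which simultaneously alters the value on $b_j$, and then check that the residual differences on the handles $j\geq 2$ can always be cancelled compatibly with the (equal) Arf invariants. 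This case analysis can be carried out, but it is exactly the work your sketch omits. The paper sidesteps it: cut $C$ along $a$ and along $a'$ to get surfaces with boundary, note that the induced spin structures have the same boundary condition (because $q(a)=q(a')$) and the same Arf invariant, and then quote the classification of spin structures on surfaces with boundary, \cite[Theorem 2.9]{RW}, together with the gluing statement \cite[Corollary 2.12]{RW}. To complete your proof, either supply the basis-by-basis argument in full or replace the ``relativized Arf'' assertion by that citation.
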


\begin{proof}
Let $a$ and $a'$ be two non-separating simple closed curves on $C$ and assume that $q(a) = q(a')$. Cut $C$ along $a$ on one hand and along $a'$ on the other hand to obtain two surfaces $C_1$ and $C_2$ with boundary. Moreover, the spin structure on $C$ induces spin structures $S_1$ and $S_2$ on $C_1$ and $C_2$ with prescribed boundary conditions.

Choose an orientation preserving diffeomorphism $\phi : (C_1,\partial C_1) \rightarrow (C_2,\partial C_2)$. The pullback $\phi^*S_2$ is a spin structure on $C_1$ with the same Arf invariant and with the same boundary condition as $S_1$ since $q(a) = q(a')$. It follows from \cite[Theorem 2.9]{RW} that there exists $\psi \in\MCG(C_1,\partial C_1)$ such that $\psi^*(\phi^*S_2) = S_1$.

 Gluing back the surfaces $C_1$ and $C_2$ using \cite[Corollary 2.12]{RW}, $\phi\circ\psi$ induces an element $\varphi\in\MCG(C,q)$ such that $\varphi(a)=a'$.
\end{proof}

\section{Appendix: computation of the canonical spin structure}\label{sec:app}

In this section, we work under the assumptions of Theorem  \ref{thm:spinm}. In such a case, Proposition \ref{prop:rappel} allows us to compute the value of the quadratic form $q$ associated to the spin structure $S:=\LS^{\otimes \frac{n}{2}}$ on a symplectic basis of $H_1(C_0,\Z/2\Z)$. Indeed,
\begin{itemize}
\item if $v$ is any lattice point of $\da$, then $[\tau_{v}]\in\im(\mu)\subset \spaut(H_1(C_0,\Z),S)$ (see Theorem \ref{thm:rappel}). Necessarily, we have $q([\delta_{v}])=1$ by Lemma \ref{lem:adm}.
\item if $v$ is an even lattice point, we can connect it to the boundary of $\Delta$ using a sequence $\sigma_1,\ldots, \sigma_{2k+1}$ of primitive integer segments such that the union of $\sigma_1,\ldots,\sigma_{2k}$ is a segment joining $v$ to a vertex of $\da$ and $\sigma_{2k+1}$ is a bridge ending at this vertex. We know from Proposition \ref{prop:rappel} and Theorem \ref{thm:rappel} that for any $i=1,\ldots,2k+1$, $[\tau_{\sigma_i}]\in\im(\mmu)\subset \spaut(H_1(C_0,\Z),S)$. In particular, $q([\delta_{\sigma_i}])=1$ and $q([\delta_{\sigma_1}]+\ldots +[\delta_{\sigma_{2k+1}}])=1$.
\item If $v$ is not even, take a primitive integer segment $\sigma'$ joining $v$ to an even lattice point $w$ and take $\sigma_1,\ldots, \sigma_{2k+1}$ to be a sequence of primitive integer vectors joining $w$ to the boundary of $\Delta$ as in the previous point. Then it follows again that $q([\delta_{\sigma'}]+[\delta_{\sigma_1}]+\ldots+[\delta_{\sigma_{2k+1}}]) = 0$.
\end{itemize}

The above computation requires that we know that certain Dehn twists are in the image of the monodromy. In general, it is quite hard to determine whether this condition is satisfied for a particular Dehn twists.

We now give a more direct proof of this computation  without using Proposition \ref{prop:rappel}. Recall first that the quadratic form $q$ can be described in the following way. 

For a simple closed curve $\delta$ on $C_0$, choose an embedded orientable surface $D$ in $\Xd$ which is transverse to $C_0$ except along its boundary $\delta$ where it meets $C_0$ normally. Trivialize the normal to $D$ in $\Xd$ and denote by $e\in \Z/2\Z$ the index modulo $2$ of the normal to $\delta$ in $C_0$ in this trivialization. Define $q(\delta) = e + \vert \itr(D) \cap C_0 \vert \in \Z/2\Z$. The condition that $K_{\Xd} \otimes \Li$ admits a square root implies that $C_0$ is a characteristic surface in $\Xd$ and thus that $q(\delta)$ does not depend on the choice of $D$. Moreover, the function $q:H_1(C_0,\Z/2\Z) \rightarrow \Z/2\Z$ is a quadratic form as defined in Section \ref{sec:generators} (see \cite[Corollary p.514]{scorpan}).

\begin{Definition}\label{def:bcycle}
For any $v \in \da \cap \Z^2$, a \textbf{$B$-cycle} at $v$ is a simple closed curve in $C_0$ globally invariant by complex conjugation and such that the corresponding path in $\Delta$ joins $v$ to the boundary of $\Delta$.
\end{Definition}

Notice that two $B$-cycles at $v$ are always homologous if one orients them carefully. In particular, the $mod \: 2$ homology class of the $B$-cycle at $v$ is well defined. the transvection along a $B$-cycle at $v$ does not depend on a choice of $B$-cycle. Denote this class by $\left[ \delta \right] \in H_1(C_0, \Z/2\Z)$.

\begin{Proposition}\label{prop:q}
Fix $v \in \da \cap \Z^2$. Then, we have 
\begin{enumerate}
\item $q(\left[ \delta \right])=1$ if and only if $v$ is an even lattice point of $\da$,
\item $q(\left[ \delta_v \right])=1$.
\end{enumerate}
\end{Proposition}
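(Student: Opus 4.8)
The plan is to compute $q$ on the $A$-cycles $\delta_v$ and on the $B$-cycles directly from the geometric description of $q$ recalled above, namely $q(\delta) = e + \vert \itr(D)\cap C_0\vert$ for a suitable bounding membrane $D$. The key is to choose, for each lattice point $v\in\da$, an explicit membrane $D$ in $\Xd$ whose boundary is the relevant cycle and for which both the normal index $e$ and the interior intersection number are readily computable. Since $C_0$ can be taken to be a smooth simple Harnack curve, I will use the model $R_{C_0} : C_0 \xrightarrow{\sim} C_\Delta$ and the amoeba/coamoeba picture, so that $A$-cycles and $B$-cycles have their concrete toric representatives: $\delta_v$ sits over the lattice point $v$ and is fixed by complex conjugation, while a $B$-cycle over $v$ projects to a path in $\Delta$ from $v$ to $\partial\Delta$.

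For part (2), the $A$-cycle $\delta_v$ lies in the real part $\R C_0$, and the natural membrane is a disk $D$ lying in a coordinate torus fiber $\Log^{-1}(\text{pt})$, or equivalently a small disk transverse to $\R C_0$ inside the ambient surface capping off $\delta_v$; for such a disk the count $\vert \itr(D)\cap C_0\vert$ is controlled by how many sheets of $C_0$ the disk crosses, which is dictated by the lattice geometry near $v$. I expect the upshot to be that the parity works out to make $q(\delta_v)=1$ regardless of whether $v$ is even or odd — consistent with the fact that $\delta_v$ is always a vanishing cycle (Theorem \ref{thm:rappel}) hence always admissible by Lemma \ref{lem:adm}. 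For part (1), the $B$-cycle over $v$ bounds a membrane $D$ that can be taken to project to the chosen path from $v$ to $\partial\Delta$ in $\Delta$; here the interior intersections with $C_0$ come from the lattice points of $\da$ that the path passes through (together with $v$ itself), and the parity of this count, combined with the normal framing contribution $e$, is exactly the parity of the coordinates of $p-w$ for $w$ a vertex of $\da$ — i.e. it records whether $v$ is even. The fact that $K_\Xd\otimes\Li$ admits a square root (equivalently, $C_0$ is characteristic) guarantees these counts are well-defined mod $2$ independently of the chosen membrane and path.

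Concretely, the steps I would carry out are: first, fix the Harnack model and normalize $\Delta$ as in the proof of Theorem \ref{thm:hyperm} (vertex at the origin with edges along the axes), so the lattice combinatorics near any given $v$ is standard; second, build the membrane for $\delta_v$ explicitly inside a torus fiber or a small normal slice, compute $e$ and $\vert\itr(D)\cap C_0\vert$, and conclude $q(\delta_v)=1$; third, build the membrane for a $B$-cycle over $v$ by sweeping along the chosen integer path from $v$ to $\partial\Delta$, identify each interior intersection point of $D$ with $C_0$ as coming from a lattice point of $\da$ on (or adjacent to) the path, and read off $q([\delta])$ as the parity of that count plus the framing term; fourth, verify that this parity equals $1$ exactly when $v$ is even, by checking it against the evenness condition $p-v\in(2\Z)^2$ and using that the answer is path-independent.

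The main obstacle I anticipate is the bookkeeping in step three: correctly identifying the interior intersection points of the $B$-cycle membrane with $C_0$ and matching their parity to the arithmetic notion of "even lattice point" in Definition \ref{def:even}. This requires a careful local analysis near each lattice point the path crosses — essentially understanding how the membrane, built from the real oriented blow-up picture of $\tilde\Delta$, meets the two sheets of $C_\Delta$ over interior lattice points — and tracking the normal-framing contribution $e$ consistently along the path. Once the local contribution at each lattice point is pinned down (I expect each interior lattice point on the path to contribute $1$, and the endpoint $v$ to contribute according to its type), summing mod $2$ and comparing with the alternative Proposition \ref{prop:rappel}-based computation given just above should close the argument.
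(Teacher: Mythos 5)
Your overall strategy --- compute $q$ via explicit membranes, using the geometric formula $q(\delta)=e+\vert\itr(D)\cap C_0\vert$ --- is indeed the paper's strategy, but the proposal stops exactly where the actual work begins, and at two points the plan as stated would go wrong. For part (2), the correct membrane is not a disk in a fiber of $\Log$ (the $A$-cycle $\delta_v$ does not lie in an argument torus); it is the disk $D\subset(\R^\ast)^2$ bounded by the oval $\delta_v$ in the real part of the Harnack curve. For that disk one checks that $D\cap C_0=\delta_v$ (no interior intersections, by looking at the amoebas), and the whole content is the framing term: the constant field $(i,i)$ trivializes the normal of $D$, and since the rotational index of the oval in $(\R^\ast)^2$ is $1$, the Euler index is $e=1$, whence $q([\delta_v])=1$. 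Your fallback ``$\delta_v$ is a vanishing cycle, hence admissible'' is logically valid given Theorem \ref{thm:rappel}, but it is precisely the indirect monodromy-based argument that the proposition is written to avoid, so it should not be presented as the direct computation.

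For part (1) the gap is more serious. The paper does not build a single membrane swept over the whole path from $v$ to $\partial\Delta$; it writes the $B$-cycle class as $[\delta_{\sigma_0}]+\dots+[\delta_{\sigma_k}]$ for a staircase of pairwise disjoint primitive integer segments, so that $q$ is additive on the sum, and reduces everything to computing $q([\delta_\sigma])$ for a single primitive segment (Lemma \ref{lem:qc}). That lemma is the technical heart you are missing: the membrane is built from an isotopy inside $\A^{-1}(s)$ over a horizontal segment $s$ of the amoeba together with a holomorphic cylinder $\{z=c\}$ running down to a toric divisor, the framing term vanishes, and the interior intersection number is computed by the Forsberg--Passare--Tsikh lemma as $j-j_{\min}$, the number of sheets of $C_0$ below the segment in normalized coordinates --- not, as you guess, the number of lattice points of $\da$ that the path passes through. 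Without this you also miss the cancellation mechanism that finishes the proof: consecutive horizontal and vertical segments of the staircase have equal $q$-values, so they cancel in pairs mod $2$ and only the parity of the last step survives, which is exactly the evenness of $v$. As written, your step three is a statement of the problem rather than a solution to it.
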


In particular, the latter proposition allows one to compute the quadratic form $q$ on a symplectic basis of $H_1(C_0, \Z)$. Indeed, number the lattice points of $\da$ from $v_1$ to $v_{g_{\Li}}$ and let us consider the elements $a_1,b_1,\ldots,a_{g_{\Li}},b_{g_{\Li}}$ of $H_1(C_0,\Z)$ where $a_i$ is the homology class of $\delta_{v_i}$ oriented arbitrarily, and $b_i$ is the homology class of a $B$-cycle at $v_i$. It follows from Remark \ref{rem:sympbas} that $a_1,b_1,\ldots,a_{g_{\Li}},b_{g_{\Li}}$ is a symplectic basis of $H_1(C_0,\Z)$ once the $B$-cycles are oriented carefully.

In order to prove Proposition \ref{prop:q}, we will need the following. 
 
\begin{Lemma}\label{lem:qc}
Let $\sigma\subset \Delta$ be a primitive integer segment not contained in $\partial \Delta$ and such that $\sigma$ is parallel to an edge $\epsilon \in \da$. Then $q(\left[\delta_\sigma\right]) =1$ if and only if one extremity of $\sigma$ is even.
\end{Lemma}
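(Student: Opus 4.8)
\textbf{Plan of proof for Lemma \ref{lem:qc}.}
The plan is to compute $q(\left[\delta_\sigma\right])$ directly from the topological definition of $q$ given just above, namely by producing an explicit Seifert-type surface $D\subset\Xd$ with $\partial D=\delta_\sigma$, transverse to $C_0$ along its interior, and then counting $\lvert\itr(D)\cap C_0\rvert\bmod 2$ together with the normal framing index $e$. The natural candidate for $D$ comes from the combinatorial model: since $\sigma$ is a primitive integer segment parallel to an edge $\epsilon$ of $\da$, there is a sub-parallelogram (or a chain of primitive parallelograms) $P\subset\Delta$ one of whose sides is $\sigma$, and the region of $C_\Delta$ lying over $P$ provides, after applying $R_C^{-1}$, an embedded disk or annulus in $\Xd$ bounded by $\delta_\sigma$. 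I would take $D$ to be the piece of $C_0$ itself together with a small disk, or more robustly, use the toric coordinates: $\sigma$ lies in an affine line $L$ meeting $\Z^2$ in consecutive points, and the corresponding $1$-parameter subgroup of $\ttor$ sweeps out a surface whose closure in $\Xd$ is the sought $D$.

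The key steps, in order: (1) Set up toric coordinates so that $\sigma$ is, say, the segment $[(0,0),(1,0)]$ sitting on the line $y=0$, with the edge $\epsilon$ parallel to it, i.e. $\epsilon\subset\{y=c\}$ for some positive integer $c$; this is legitimate by an affine integer change of coordinates since $\sigma$ and $\epsilon$ are parallel and $\Delta$ is smooth. (2) Identify $\delta_\sigma$ concretely inside a smooth curve $C_0\in\LL$ using the diffeomorphism $R_{C_0}$ of \cite[Proposition 4.6]{CL}, so that $\delta_\sigma$ sits over $\sigma$ in the amoeba picture and bounds the obvious cylinder over the segment $[\sigma,\epsilon]$ of $C_\Delta$. (3) Build $D$ as the closure in $\Xd$ of this cylinder pushed slightly off $C_0$; here the number of interior intersection points of $D$ with $C_0$ is governed by how many lattice points the pushed-off chain crosses, which is where the parity of the \emph{extremities} of $\sigma$ (even versus odd in the sense of Definition \ref{def:even}) enters. (4) Compute the normal framing index $e$ using the fact that $C_0$ is a characteristic surface because $K_\Xd\otimes\Li$ has a square root; compare against the known value $q(\left[\delta_v\right])=1$ for an $A$-cycle, or against the values on $B$-cycles, to pin down the constant. (5) Assemble: $q(\left[\delta_\sigma\right]) = e + \lvert\itr(D)\cap C_0\rvert$ and read off that this equals $1$ exactly when one endpoint of $\sigma$ is even.

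The main obstacle I expect is step (3)–(4): making the Seifert surface $D$ genuinely embedded and transverse to $C_0$ in a way that lets one count $\lvert\itr(D)\cap C_0\rvert$ honestly, and then correctly bookkeeping the framing term $e$ so that the final answer is a clean parity statement rather than an ambiguous constant. An alternative, possibly cleaner route to circumvent this is to argue relatively: show that if $\sigma,\sigma'$ are two parallel primitive integer segments differing by an even lattice translation in the direction transverse to $\epsilon$, then $\left[\delta_\sigma\right] = \left[\delta_{\sigma'}\right]$ in $H_1(C_0,\Z/2\Z)$ (the region between them being an even-area parallelogram region, whose boundary contribution vanishes mod $2$), so that $q(\left[\delta_\sigma\right])$ depends only on the parity class of the endpoints; then verify the two cases (both endpoints even vs. one odd) by reducing each to a primitive segment adjacent to $\epsilon$ itself, for which $\delta_\sigma$ can be isotoped to lie on $C_0$ near the $A$-cycle $\delta_v$ at the appropriate vertex $v$ of $\da$ and the value of $q$ is already known. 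I would pursue this relative approach first, falling back on the explicit surface $D$ only to nail down the single base case.
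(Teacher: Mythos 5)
Your first route is, in outline, the paper's actual proof: normalize $\Delta$ so that $\sigma$ is horizontal, realize $\delta_\sigma$ over a segment $s$ in the amoeba of a suitably chosen simple Harnack curve, build a membrane $D$, and compute $q=e+\vert\itr(D)\cap C_0\vert$. But the two steps you defer as ``the main obstacle'' are the entire mathematical content of the lemma, and your sketch of them is not yet an argument. In the paper, $D$ is not ``the cylinder over $[\sigma,\epsilon]$ pushed off $C_0$'' (that region does not bound $\delta_\sigma$ alone); it is an isotopy cylinder inside $\A^{-1}(s)$ from $\delta_\sigma$ to a $(0,1)$-geodesic of an argument torus, capped by a holomorphic cylinder $\{z=c\}$ running down to a toric divisor. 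The intersection count is then not ``how many lattice points a chain crosses'' but the order-map computation $\vert\itr(D)\cap C_0\vert=j-j_{\min}=j+1$ via \cite[Lemma 2.2]{FPT}, and $e=0$ is obtained from an explicit trivialization (the direction $(1,1)$ is nowhere tangent to $\A(D)$), not by comparison with known values of $q$. You would also need the independence of $q([\delta_\sigma])$ from the choice of Harnack curve to justify picking a patchworked $C_0$ whose amoeba contains the segment $s$; the paper invokes \cite[Proposition 4.6]{CL} for this.

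The alternative route you say you would pursue first contains a false claim. Two parallel primitive segments $\sigma,\sigma'$ differing by an even translation transverse to $\epsilon$ are in general \emph{not} equal in $H_1(C_0,\Z/2\Z)$: by Remark \ref{rem:sympbas}, $\delta_\sigma$ has odd mod-$2$ intersection with the $A$-cycles at the endpoints of $\sigma$ and even intersection with those at the endpoints of $\sigma'$, and vice versa, so the two classes are distinguished by the intersection pairing. The preimage in $C_0$ of the rectangle between $\sigma$ and $\sigma'$ is a subsurface whose mod-$2$ boundary also contains the vertical side curves and the $A$-cycles at interior lattice points, so no relation $[\delta_\sigma]=[\delta_{\sigma'}]$ comes out of it. Moreover, your proposed base case cannot cover the lemma: when neither extremity of $\sigma$ is even, only $\tau_\sigma^2$ is known to lie in the monodromy image (Proposition \ref{prop:rappel}), which gives no information about $q([\delta_\sigma])$, so the value $q=0$ in that case cannot be ``already known'' and genuinely requires the direct membrane computation. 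You should fall back on the explicit construction of $D$ and carry it out in full.
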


In order to compute $q$, we will construct a membrane $D$ using some consideration on the amoeba of $C_0$. 

Recall that the amoeba map $\A : (\C^\ast)^2 \rightarrow \R^2$ is defined as $\A(z,w):=\big(\log\vert z \vert, \log \vert w \vert \big)$ and that the amoeba of $C_0$ is simply the subset $\A(C_0) \subset \R^2$. For simple Harnack curves, $\A$ is a $2$-to-$1$ covering folded along $\R C_0$, identifying $\pi_0(\R C \cap (\C^\ast)^2)$ with $\pi_0\big(\partial \A(C_0)\big)$. Moreover, the order map $\nu : \pi_0\big( \R^2 \setminus \A(C_0)\big) \rightarrow \Delta \cap \Z^2$ introduced in \cite{FPT} is a bijection, see \cite{Mikh}. 

Before proving Lemma \ref{lem:qc}, recall at last the normalization procedure of \cite{CL}.

\begin{Definition}\label{def:normalization}
If $\kappa$ is a vertex of $\da$, a \textbf{normalization} of $\Delta$ at $\kappa$ is an invertible affine transformation $A:\R^2\rightarrow\R^2$ preserving the lattice and mapping $\kappa$ to $(0,0)$ and its adjacent edges in $\da$ to the edges directed by $(1,0)$ and $(0,1)$.
\end{Definition}

We will usually abuse notation and keep calling $\Delta$ the image of $\Delta$ under a normalization. 

There are two possibilities for the polygon $\Delta$ near $\kappa$, as depicted in the Figure \ref{fig:cornercase}.
Indeed, since the normal fan of $\Delta$ is a refinement of that of $\da$, there is an edge of $\Delta$ supported on the line $y=-1$ and one supported on the line $x=-1$. The two cases are as follows :
\begin{itemize}
\item either those two edges meet, see the left case of Figure \ref{fig:cornercase};
\item or there is one edge $\epsilon$ of integer length $1$ and slope $(-1,1)$ between those two, see the right case of Figure \ref{fig:cornercase}.
\end{itemize}

\begin{figure}[h]
\centering
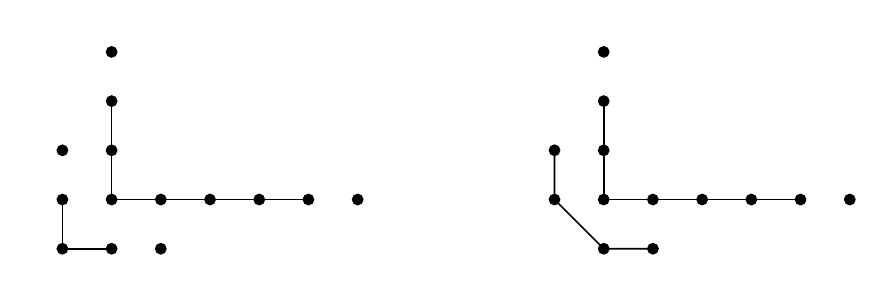
\caption{The polygon $\Delta$ near $\kappa$} \label{fig:cornercase}
\end{figure}

\begin{proof}[Proof of Lemma \ref{lem:qc}]
After normalizing $\Delta$ at one of the vertices $\kappa$ of $\epsilon$, we can assume without any loss of generality that $\sigma$ is the primitive integer segment of slope $(1,0)$ starting at some point $(i,j)$. In particular, $\sigma$ has an even extremity if and only if $j$ is even.

As a consequence of \cite[Proposition 4.6]{CL}, the value of $q(\left[\delta_\sigma\right])$ does not depend on the particular simple Harnack curve $C_0$ that we pick. Therefore, we can consider a curve $C_0$ for which there exists a horizontal segment $s \subset \A(C_0)$ joining the boundary component of order $(i,j)$ to the one of order $(i+1,j)$. One can construct such a curve using patchworking according to a subdivision of $\Delta$ containing $\sigma$, see \cite[Section 3]{BIMS}.

We now construct a membrane $D\subset X$ intersecting $C_0$ normally along $\delta := \A^{-1}(s)\cap C_0$. Observe that $\delta$ is a simple closed curve isotopic to $\delta_\sigma$, in particular $q(\left[\delta_\sigma\right])=q(\left[\delta\right])$.  Denote by $p$ the rightmost point of $s$ and by  $\lambda$  the geodesic of slope $(0,1)$ in the argument torus $\A^{-1}(p) \simeq S^1\times S^1$. By \cite[Lemma 11]{Mikh}, the loop $\delta$ has class $\left[ (0,1) \right] \in H_1 ( \ttor, \Z)$. It follows that we can construct an isotopy from $\delta$ to $\lambda$ inside $\A^{-1}(s)$, see Figure \ref{fig:membrane}. Denote by $I$ the total space of this isotopy. Take now $\varepsilon>0$ small enough such that $\left]p, p+(\varepsilon,0)\right]$ is contained in the complement component of $\A(C_0)$ of order $(i+1,j)$. Consider a convex smoothing $F$ of the piecewise linear curve $\left[ p, p+(\varepsilon, 0) \right] \cup \left\lbrace p+(\varepsilon, y) \, \vert \,  y\leq 0\right\rbrace$ in a small neighbourhood of $p+(\varepsilon, 0)$ disjoint from $\A(C_0)$. Define $D$ to be the closure of $I \cup (F \times \lambda)$ in $ \Xd$. Notice that the part of $D$ supported on the half-line $\left\lbrace p+(\epsilon, y) \, \vert \,  y \leq 0 \right\rbrace$ is a holomorphic annulus contained in $\left\lbrace (z,w) \in (\C^\ast)^2 \,  \vert \,  z = c \right\rbrace$ for some $c\in \C^\ast$. The closure of the cylinder in $\Xd$ hits the toric divisor $D_\epsilon$ in a point and is therefore a holomorphic disc. It follows that $D \subset \Xd$ is a smooth membrane intersecting  $C_0$ normally along $\delta$ and transversally anywhere else, for a generic $\varepsilon$.

In order to compute $q(\left[\delta\right])$, we need to determine the index $e \in \Z/2\Z$ of the normal of $\delta \subset C_0$ with respect  to a trivialization of the normal of $D$. To that end, notice that the vector  $(1,1)$ is never tangent to $\A(D)$ and then provides a trivialization of the normal of $D$ inside the chart $\C^\ast \times \C \subset \Xd$, where $D_\epsilon = \C^\ast \times  \{0\}$. Now, we can choose the normal vector field of $\delta$ inside $C_0$ so that its image by the tangent map $T \A$ has a strictly positive vertical coordinate. It is then isotopic to the restriction along $s$ of the normal of $D$ chosen above. We deduce that the Euler index $e$ is trivial and that $q(\left[\delta\right]) = \vert \itr (D) \cap C_0 \vert \; mod\, 2$. 

It remains to compute the number of  intersection points  $\vert \itr (D) \cap C_0 \vert$. As this intersection is contained in the cylinder $\left\lbrace (z,w) \in (\C^\ast)^2 \vert \,  z = c \right\rbrace$, it follows from \cite[Lemma 2.2]{FPT} that $\vert \itr (D) \cap C_0 \vert$ is given by $j-j_{min}$ where $j_{min}:=\min \{ \tilde{j} \, \vert \, (\tilde{i}, \tilde{j}) \in \Delta \cap \Z^2 \}$. From the way we normalized $\Delta$, we deduce that $j_{min}=-1$ and in turn that $q(\left[\delta\right]) = j+1 \; mod\, 2$. Therefore $q(\left[\delta\right]) =1$ if and only if $\sigma$ has an even extremity.
\end{proof}

\begin{figure}[h]
\centering
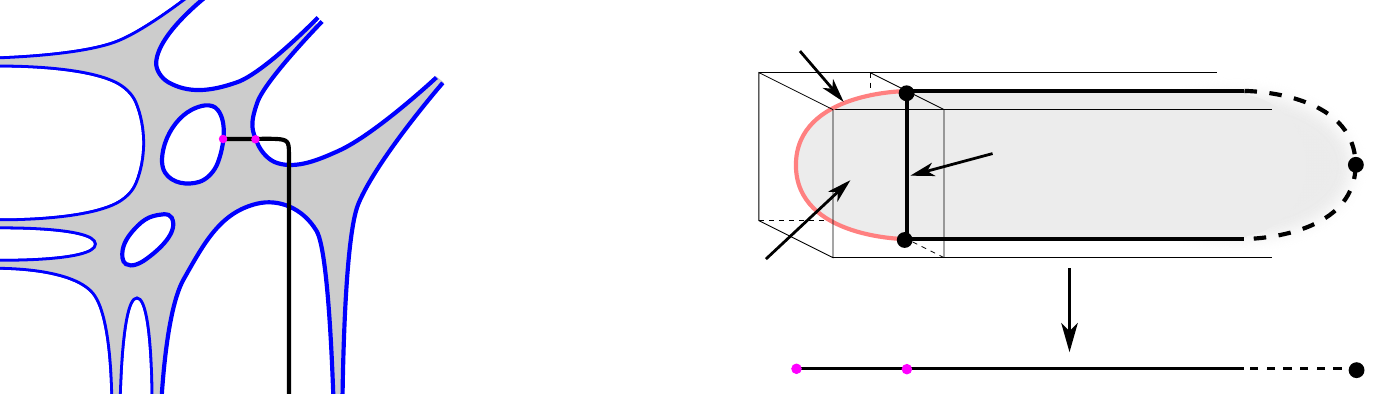
\caption{The construction of $D$. On the left, the position of $s \cup F$ with respect to $\A(C_0)$. On the right, $D$ (in grey) inside the solid torus $\overline{\A^{-1}(s \cup F)}$.}
\label{fig:membrane}
\end{figure}

\begin{proof}[Proof of Proposition \ref{prop:q}]
For the first assertion, notice that we can always normalize $\Delta$ in such a way that $v$ can be joined to $(-1,0) \in \partial \Delta$ by a concatenation of pairwise distinct primitive integer segments that are either horizontal or vertical. Indeed, it suffice to normalize as done in Figure \ref{fig:cornercase}.

Consider such collection $\sigma_0$, ..., $\sigma_k \subset \Delta$ of primitive integer segments. As the $B$-cycle $[\delta]$ at $v$ is given by $\left[ \delta_{\sigma_0}\right] + \dots + \left[\delta_{\sigma_k} \right] \in H_1(C_0,\Z)$ and as $\ip{\delta_{\sigma_l}}{\delta_{\sigma_m}} =0$ for any couple $(l,m)$, we have $$q(\left[ \delta_{\sigma_0}\right] + \dots + \left[ \delta_{\sigma_k} \right]) = q(\left[ \delta_{\sigma_0}\right]) + \dots  +  q(\left[ \delta_{\sigma_k} \right]).$$
By Lemma \ref{lem:qc}, we have that $q(\left[ \delta_{\sigma_{2j}}\right])= q(\left[ \delta_{\sigma_{2j+1}}\right])$ for any integer $j\leq (k-1)/2$. It follows that $q(\left[ \delta \right])=1$ if and only if $k$ is even and $q(\left[ \delta_{\sigma_k} \right])=1$, $i.e$ $v$ is an even lattice point of $\da$.

For the second assertion, consider the closed disc $D\subset (\R^\ast)^2$ bounded by the $A$-cycle $\delta_v$. By looking at the amoebas of $D$ and $C_0$, we deduce that $D$ and $C_0$ only intersect along $\delta_v$. As $D \subset (\R^\ast)^2$, the constant vector field $(i,i)$ on $\D$ provides a trivialization of its normal in $\Xd$. Multiplying the tangent vector field of $\delta_v$ by $i$ provides the normal vector field of $\delta_v$ in $C_0$. As the rotational index of $\delta_v \subset (\R^\ast)^2$  is $1$, it follows that the Euler index of $D$ with respect to $C_0$ is $1$. We conclude that $q(\left[ \delta_v \right])=1$.
\end{proof}

\bibliographystyle{alpha}

\vspace{1cm}
Email: remicretois@yahoo.fr, lang@math.su.se

\end{document}